\numberwithin{equation}{section}
\theoremstyle{plain}
\newtheorem{thm}{\protect\theoremname}[section]
  \theoremstyle{plain}
  \newtheorem{lem}[thm]{\protect\lemmaname}
  \theoremstyle{plain}
  \newtheorem{cor}[thm]{\protect\corollaryname}
  \theoremstyle{remark}
  \newtheorem{rem}[thm]{\protect\remarkname}
\newcommand{\mfg}{\mathfrak{g}}
\newcommand{\mfp}{\mathfrak{p}}
\newcommand{\mfo}{\mathfrak{o}}
\newcommand{\mfA}{\mathfrak{A}}
\newcommand{\mfP}{\mathfrak{P}}
\newcommand{\F}{\ensuremath{\mathbb{F}}}
\newcommand{\Z}{\ensuremath{\mathbb{Z}}}
\DeclareMathOperator{\Irr}{Irr}
\DeclareMathOperator{\Ind}{Ind}
\DeclareMathOperator{\Stab}{Stab}
\DeclareMathOperator{\tr}{tr}
\DeclareMathOperator{\Ker}{Ker}
\DeclareMathOperator{\End}{End}
\DeclareMathOperator{\rank}{rank}
\DeclareMathOperator{\GL}{GL}
\DeclareMathOperator{\M}{M}
\DeclareMathOperator{\chara}{char} 
\newcommand{\iso}{\mathbin{\kern.15em\widetilde{\hphantom{\hspace{.6em}}}\kern-.98em\rightarrow\kern.05em}}
\newcommand{\longiso}{\mathbin{\kern.3em\widetilde{\hphantom{\hspace{1.1em}}}\kern-1.55em\longrightarrow\kern.1em}}
\renewcommand{\theenumi}{{\arabic{enumi})}}
\renewcommand{\labelenumi}{\theenumi}
\date{}
\newcommand{\Amin}{\mfA_{\mathrm{m}}} 
\newcommand{\Pmin}{\mfP_{\mathrm{m}}}
\newcommand{\Umin}{U_{\mathrm{m}}}
\newcommand{\Amax}{\mfA_{\mathrm{M}}} 
\newcommand{\Pmax}{\mfP_{\mathrm{M}}}
\newcommand{\Umax}{U_{\mathrm{M}}}
\newcommand{\Hmin}{H_{\mathrm{m}}}
\newcommand{\Jmin}{J_{\mathrm{m}}}
\newcommand{\Hmax}{H_{\mathrm{M}}}
\newcommand{\Jmax}{J_{\mathrm{M}}}
\newcommand{\JmM}{J_{\mathrm{m,M}}}
\newcommand{\thetam}{\theta_{\mathrm{m}}}
\newcommand{\thetaM}{\theta_{\mathrm{M}}}
\newcommand{\etam}{\eta_{\mathrm{m}}}
\newcommand{\etaM}{\eta_{\mathrm{M}}}
\newcommand{\hatetaM}{\hat{\eta}_{\mathrm{M}}}
\newcommand{\emin}{e_{\mathrm{m}}}
\newcommand{\emax}{e_{\mathrm{M}}}
\newcommand{\betam}{\beta_{\mathrm{m}}}
\providecommand{\corollaryname}{Corollary}
  \providecommand{\lemmaname}{Lemma}
\providecommand{\theoremname}{Theorem}
  \providecommand{\corollaryname}{Corollary}
  \providecommand{\lemmaname}{Lemma}
  \providecommand{\remarkname}{Remark}
\providecommand{\theoremname}{Theorem}
\begin{document}

\title[Regular representations of $\GL_{N}$]{The regular representations of $\GL_{N}$ over finite local principal
ideal rings}

\author{Alexander Stasinski and Shaun Stevens}

\address{Alexander Stasinski, Department of Mathematical Sciences, Durham
University, South Rd, Durham, DH1 3LE, UK}

\email{alexander.stasinski@durham.ac.uk}

\address{Shaun Stevens, School of Mathematics, University of East Anglia,
Norwich Research Park, Norwich NR4 7TJ, UK}

\email{shaun.stevens@uea.ac.uk}
\begin{abstract}
Let $\mathfrak{o}$ be the ring of integers in a non-Archimedean local
field with finite residue field, $\mathfrak{p}$ its maximal ideal,
and $r\geq2$ an integer. An irreducible representation of the finite
group $G_{r}=\GL_{N}(\mathfrak{o}/\mathfrak{p}^{r})$ is called regular
if its restriction to the principal congruence kernel $K^{r-1}=1+\mfp^{r-1}\M_{N}(\mfo/\mfp^{r})$
consists of representations whose stabilisers modulo $K^{1}$ are
centralisers of regular elements in $\M_{N}(\mfo/\mfp)$. 

The regular representations form the largest class of representations
of $G_{r}$ which is currently amenable to explicit construction.
Their study, motivated by constructions of supercuspidal representations,
goes back to Shintani, but the general case remained open for a long
time. In this paper we give an explicit construction of all the regular
representations of~$G_{r}$.
\end{abstract}

\maketitle
 \renewcommand\labelenumi{\emph{(\roman{enumi})}} \renewcommand\theenumi\labelenumi 

\section{Introduction}

Let $F$ be a non-Archimedean local field with ring of integers $\mathfrak{o}$,
maximal ideal $\mathfrak{p}$ and finite residue field $\F_{q}$ of
characteristic $p$. The known explicit constructions of complex supercuspidal
representations of $\GL_{N}(F)$ are closely related to constructions
of representations of the maximal compact subgroup $\GL_{N}(\mfo)$.
These constructions go back to Shintani \cite{Shintani-68}, G\'e{}rardin
\cite{Gerardin}, Kutzko \cite{Kutzko-GL2-I,Kutzko-GL2-II}, Shalika
\cite{Shalika}, Howe \cite{Howe-Tame}, Carayol \cite{Carayol-supercusp},
culminating in the complete construction of supercuspidal representations
by Bushnell and Kutzko \cite{BushnellKutzko}. All of these constructions
are based on induction from compact mod centre subgroups of $\GL_{N}(F)$,
and as any compact subgroup is contained in a conjugate of $\GL_{N}(\mfo)$,
these constructions can also be seen as giving representations of
$\GL_{N}(\mfo)$. This connection goes further, because it has been
shown that every supercuspidal representation determines a unique
type on $\GL_{N}(\mfo)$, and two supercuspidals determine the same
type if and only if they differ by twisting by an unramified character;
see \cite[Appendix]{Henniart-appendix} and \cite{Vytas-unicity}. 

While the smooth representations of $\GL_{N}(F)$ have been studied
extensively, less is known about the representations of $\GL_{N}(\mfo)$.
The purpose of the current paper is to give a construction of a large
class of smooth complex representations of $\GL_{N}(\mfo)$ called
regular representations, which we now define. For any integer $r\geq1$
write $\mathfrak{o}_{r}$ for the finite local principal ideal ring
$\mathfrak{o}/\mathfrak{p}^{r}$. We will use $\mathfrak{p}$ to denote
also the maximal ideal in $\mathfrak{o}_{r}$. For any integer $r\geq2$,
let $G_{r}=\GL_{N}(\mathfrak{o}_{r})$. Every smooth, or equivalently,
continuous, representation of $\GL_{N}(\mfo)$ factors through some
group $G_{r}$. For any integer $i$ such that $r\geq i\geq1$, let
$G_{i}=\mbox{GL}_{N}(\mathfrak{o}_{i})$, let $\rho_{r,i}:G_{r}\rightarrow G_{i}$
be the homomorphism induced by the canonical map $\mathfrak{o}_{r}\rightarrow\mathfrak{o}_{i}$,
and let $K^{i}=\Ker\rho_{r,i}$ be the $i$-th principal congruence
kernel in $G_{r}$. Let $\mfg_{i}=\M_{N}(\mathfrak{o}_{i})$ denote
the algebra of $N\times N$ matrices over $\mathfrak{o}_{i}$. We
then have $K^{i}=1+\mathfrak{p}^{i}\mfg_{r}$. To any irreducible
representation $\pi$ of $G_{r}$ we can associate an adjoint orbit
(i.e., conjugation orbit, or similarity class) in $\mfg_{1}=\M_{N}(\F_{q})\cong K^{r-1}$,
via Clifford's theorem. The representation $\pi$ is called\emph{
regular} if the orbit consists of regular elements (i.e., the centraliser
in $\GL_{N}(\overline{\F}_{q})$ of any of its elements has minimal
dimension $N$). This is equivalent to the condition that the centraliser
in $G_{1}$ of any element in the orbit is abelian. The main reason
for focussing on regular representations is that their construction
lends itself well to the methods of Clifford theory. In particular,
the regular representations form the largest family of representations
which has so far been constructed explicitly for all $G_{r}$.

The study of regular representations of $G_{r}$ goes back to Shintani
\cite{Shintani-68}, and independently and later, Hill \cite{Hill_regular},
who constructed all the regular representations when $r$ is even.
The general case where $r$ is odd is much more difficult, requires
new ideas, and remained incomplete until the present paper. Assume
now that $r$ is odd. In \cite{Hill_semisimple_cuspidal} Hill constructed
all the cuspidal representations (i.e., the orbit has irreducible
characteristic polynomial), and in \cite{Hill_regular} he gave a
construction of so-called split regular representations (i.e., the
orbit has all its eigenvalues in $\F_{q}$). However, it was noted
by Takase \cite{Takase-16} that the results in \cite{Hill_regular}
do not give all the split regular representations, and in any case,
there exist many non-split non-cuspidal regular representations. While
the present work was in preparation, Krakovski, Onn and Singla \cite{KOS}
gave a construction of the regular representations of $G_{r}$ when
the residue characteristic $p$ is not $2$. In the present paper
we complete the picture by giving a construction of all the regular
representations of~$G_{r}$. For a somewhat more detailed comparative
account of the development of constructions of representations of
$G_{r}$, see \cite{reg-reps-survey}. 

In the present paper we give an explicit construction of all the regular
representations of $G_{r}$, without any assumption on the residue
characteristic of $\mfo$. Just like the other constructions mentioned
above, our approach is based on Clifford theory and orbits. A distinguishing
feature is that it is in some ways similar to the construction of
types on $\GL_{N}(\mfo)$ by Bushnell and Kutzko mentioned above.
The latter goes beyond regular representations but in a certain sense
only deals with semisimple elements, while we need to deal directly
with, for instance, regular nilpotent elements. Focussing on regular
elements has several technical advantages, but allowing non-semisimple
elements brings new phenomena, such as the non-triviality of the radical
of the form on $\Jmin^{1}/\Hmin^{1}$ (see Lemmas~\ref{lem:Radical}
and \ref{lem: dim-etam-etaM}). Since several of our lemmas hold also
for non-regular elements (but with more difficult proofs), it would
be interesting to know if the construction could be pushed further
to encompass both the regular representations and the supercuspidal
types on $\GL_{N}(\mfo)$ of Bushnell and Kutzko. 

Although there are many irreducible representations of $G_{r}$ which
are not regular, the regular representations are generic in the sense
that the regular elements in $\M_{N}(\overline{\F}_{q})$ are dense.
In particular, for $\GL_{2}(\mfo_{r})$ \emph{all }the irreducible
representations are either regular or pull-backs from $\GL_{2}(\mfo_{r-1})$.
Moreover, as noted by Lusztig, it is likely that the higher level
Deligne-Lusztig representations $\pm R_{T}^{G}(\theta)$, for $\theta$
regular and in general position, constructed in \cite{Lusztig-Fin-Rings}
and \cite{Alex_Unramified_reps} are regular in the case of $\mbox{GL}_{N}(\mathfrak{o}_{r})$,
and are therefore subsumed in the construction of the present paper.
That this is the case when $r$ is even is proved in \cite{Chen-Stasinski}.

\subsection*{Organisation of the paper}

In Section~\ref{sec:Parahoric} we define parahoric subalgebras and
associated filtrations of subgroups of $G_{r}$, using flags of $\mathfrak{o}_{r}$-modules
in $\mfg_{r}$. These are finite versions of the subgroups associated
to lattice chains in \cite[Section~1]{BushnellKutzko}. The particular
subalgebra $\Amin$, determined by the characteristic polynomial of
a regular element in $\mfg_{r}$ plays a central role in our construction.
In Section~\ref{sec:Chars}, we describe characters of certain abelian
groups defined earlier, and record well-known results about the existence
of ``Heisenberg lifts''. In Section~\ref{sec:Construction} we
give the construction of regular representations of $G_{r}$. The
reader who would like a quick overview of the steps of the construction,
illustrated by a diagram, may look at the discussion preceding Lemma~\ref{lem:psi-extn-theta}.
Our main theorem summarises the consequences of the construction for
the description of representations and appears Theorem~\ref{thm:Main-regular}.
Finally, Section~\ref{sec:Concluding-remarks} collects a few concluding
remarks.

\subsection*{\label{sec:Not-Pre}Notation and conventions}

If $G$ is a finite group, we will write $\Irr(G)$ for the set of
isomorphism classes of complex irreducible representations of $G$.
For convenience, we will always consider an element $\pi\in\Irr(G)$
as a representation, rather than an equivalence class of representations,
that is, we identify $\pi\in\Irr(G)$ with any representative in its
isomorphism class. One can view $\Irr(G)$ as the set of irreducible
characters of $G$, but we prefer to consider representations. If
$G$ is abelian, we will often refer to a one-dimensional representation
of $G$ as a character. If $H\subseteq G$ is a subgroup and $\pi$
is any representation of $G$ we write $\pi|_{H}$ for the restriction
of $\pi$ to $H$. If, moreover, $\sigma$ is a representation of
$H$, we will write $\Irr(G\mid\sigma)$ for the subset of $\Irr(G)$
consisting of representations which have $\sigma$ as an irreducible
constituent when restricted to $H$. 

Depending on the context, we use the notation $\M_{N}(\mfo_{r})$,
$\mfg_{r}$ or $E$, respectively, to denote the algebra of $N\times N$
matrices over $\mfo_{r}$. The notation $E$ appears (only) in Section~\ref{sec:Parahoric}
where the algebra is seen as the endomorphisms of $\mfo_{r}^{N}$,
and $\mfg_{r}$ appears in the rest of the paper, where it plays the
role of the $\mfo_{r}$-points of the Lie algebra of $\GL_{N}$.

We use $\varpi$ to denote a fixed choice of generator of $\mfp\subset\mfo_{r}$.

We will make free use of some well known results from Clifford theory
(see \cite[Section~2]{Alex_smooth_reps_GL2}). 

\subsubsection*{Acknowledgement. }

The first author was supported by EPSRC grant EP/K024779/1. The second
author was supported by EPSRC grant EP/H00534X/1. We wish to thank
Uri Onn for alerting us to several typos in a previous version of
this paper.

\section{\label{sec:Parahoric}parahoric subalgeberas and filtrations of subgroups}

The main goal of this section is to attach to any element in $\M_{N}(\F_{q})$
a parahoric subgroup of $G_{r}$ together with a natural filtration.
These filtrations are finite versions of the ones defined by lattice
chains in \cite[Section~1]{BushnellKutzko}.

Let $V$ be a free $\mathfrak{o}_{r}$-module of rank $N$, and let
$\overline{V}=V\otimes_{\mathfrak{o}_{r}}\F_{q}\cong V/\mathfrak{p}V$
(an $N$-dimensional vector space over $\F_{q}$). Let $\rho_{r,1}:V\rightarrow\overline{V}$
denote the canonical map. Let $E=\End_{\mathfrak{o}_{r}}(V)$ and
$\overline{E}=\End_{k}(\overline{V})\cong E\otimes_{\mathfrak{o}_{r}}\F_{q}$.
Let 
\[
V=V_{0}\supset V_{1}\supset\dots\supset V_{e}=0
\]
be a flag of $\mathfrak{o}_{r}$-modules with $e\geq1$ an integer,
and such that $V_{i}$ is free for $0\leq i\leq e-1$. Let $\overline{V}=\overline{V}_{0}\supset\overline{V}_{1}\supset\dots\supset\overline{V}_{e}=0$
be the flag of $\F_{q}$-vector spaces obtained by setting $\overline{V}_{i}=V_{i}\otimes_{\mathfrak{o}_{r}}\F_{q}\cong V_{i}/\mathfrak{p}V_{i}$.
We have $\rank_{\mathfrak{o}_{r}}V_{i}=\dim_{\F_{q}}\overline{V}_{i}$
for $0\leq i\leq e-1$. 
\begin{lem}
Let notation be as above. We then have a chain of $\mathfrak{o}_{r}$-modules
\[
V=L_{0}\supset L_{1}\supset\dots\supset L_{e}\supset\dots\supset L_{er}=0,
\]
where $L_{i+ej}=\mathfrak{p}^{j}\rho_{r,1}^{-1}(\overline{V}_{i})=\mathfrak{p}^{j}(V_{i}+\mathfrak{p}V)$,
for $0\leq i\leq e-1$ and $0\leq j$. 
\end{lem}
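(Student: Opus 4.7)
The lemma boils down to three routine checks: the two displayed forms of $L_{i+ej}$ agree, consecutive terms are nested, and the chain terminates at $0$ when the index reaches $er$. My plan is to dispatch each in turn, the only care needed being in the double-index bookkeeping.

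For the equality $\mfp^{j}\rho_{r,1}^{-1}(\overline{V}_{i}) = \mfp^{j}(V_{i} + \mfp V)$, I would first establish it with $j=0$ and then multiply through by $\mfp^{j}$. By construction $\rho_{r,1}\colon V\to\overline{V}$ is surjective with kernel $\mfp V$, and $\overline{V}_{i} = V_{i}\otimes_{\mfo_{r}}\F_{q} = V_{i}/\mfp V_{i}$ is precisely the image $\rho_{r,1}(V_{i})$ regarded as a subspace of $\overline{V}$. Hence $\rho_{r,1}^{-1}(\overline{V}_{i}) = V_{i} + \ker \rho_{r,1} = V_{i} + \mfp V$, giving the second form on multiplication by $\mfp^{j}$.

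For the nesting, I would distinguish two types of consecutive transitions. For $0\leq i\leq e-2$ and any $j\geq 0$, the inclusion $L_{i+ej} \supset L_{(i+1)+ej}$ is immediate from $V_{i}\supset V_{i+1}$, which gives $V_{i}+\mfp V \supset V_{i+1}+\mfp V$ and then we multiply by $\mfp^{j}$. For the ``wrap-around'' transition from index $(e-1)+ej$ to index $0 + e(j+1)$, the required containment is $\mfp^{j}(V_{e-1} + \mfp V) \supset \mfp^{j+1}(V + \mfp V) = \mfp^{j+1}V$, which holds because $V_{e-1} + \mfp V \supseteq \mfp V$. Finally, $L_{er}$ corresponds to $i=0$, $j=r$, so $L_{er} = \mfp^{r}(V + \mfp V) = \mfp^{r}V = 0$ since $V$ is an $\mfo_{r}$-module.

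The verification is genuinely routine; there is no substantive obstacle. The most error-prone point is merely to confirm the wrap-around, i.e.\ that the successor of the index $(e-1)+ej$ under the parametrisation $0\leq i\leq e-1$, $j\geq 0$ is indeed $0+e(j+1)$, so that the two types of transition above cover all consecutive steps in the chain.
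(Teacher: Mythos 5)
There is a genuine gap. In the paper the symbol $\supset$ in the displayed chain denotes \emph{strict} inclusion, and the paper's own proof states explicitly that strictness is the only thing requiring proof; your argument establishes only the weak inclusions $\supseteq$ together with the endpoint identities $L_0=V$ and $L_{er}=0$. Two points are skipped. First, even at $j=0$ the strictness of $V_i+\mfp V\supset V_{i+1}+\mfp V$ is not ``immediate from $V_i\supset V_{i+1}$'': adding $\mfp V$ can collapse a strict inclusion (e.g.\ if one had $V_{i+1}=\mfp V_i$). What saves it here is that $V_i$ and $V_{i+1}$ are free direct summands with $\rank V_i=\dim_{\F_q}\overline V_i$ and $N_i>N_{i+1}$, so $\overline V_i\supsetneq\overline V_{i+1}$ and hence $\rho_{r,1}^{-1}(\overline V_i)\supsetneq\rho_{r,1}^{-1}(\overline V_{i+1})$. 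Second, and more seriously, multiplication by $\mfp^j$ does not in general preserve strict inclusions of $\mfo_r$-submodules (already in $\mfo_2$ one has $\mfp\supsetneq 0$ but $\mfp\cdot\mfp=0=\mfp\cdot 0$), so ``then we multiply by $\mfp^j$'' does not deliver $L_{i+ej}\supsetneq L_{i+ej+1}$ for $j\geq 1$, nor does $V_{e-1}+\mfp V\supseteq\mfp V$ give strictness at the wrap-around step.

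The paper closes exactly this gap by observing that for $0\leq j\leq r-1$ multiplication by $\mfp^j$ induces an isomorphism $L_i/L_{i+1}\cong L_{i+ej}/L_{i+ej+1}$, while $\rho_{r,1}$ induces $L_i/L_{i+1}\cong\overline V_i/\overline V_{i+1}\neq 0$; this in turn rests on the freeness hypotheses (concretely, on the explicit basis \eqref{eq:latticebasis} of $L_{i+ej}$, in which the elements $\varpi^j x_1,\ldots,\varpi^j x_{N_i}$ survive and witness the strictness). To repair your proof you should add an argument of this kind for each consecutive step with $j\geq 1$; the rest of what you wrote (the identification $\rho_{r,1}^{-1}(\overline V_i)=V_i+\mfp V$, the weak nesting, the index bookkeeping, and $L_{er}=\mfp^r V=0$) is correct.
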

\begin{proof}
The only thing that requires proof is that all the inclusions are
strict. If $0\leq i\leq e-1$ and $0\leq j\leq r-1$ then multiplication
by $\mathfrak{p}^{j}$ induces an isomorphism $L_{i}/L_{i+1}\cong L_{i+ej}/L_{i+ej+1}$,
while the map $\rho_{r,1}$ induces an isomorphism $L_{i}/L_{i+1}\cong\overline{V}_{i}/\overline{V}_{i+1}$;
in particular, $L_{i+ej}/L_{i+ej+1}$ is non-zero. 
\end{proof}
We put $N_{i}=\rank_{\mathfrak{o}_{r}}V_{i}$, for $i=0,\ldots,e$,
so that $N=N_{0}>N_{1}>\cdots>N_{e}=0$. Since $\mathfrak{o}_{r}$
is a self-injective ring (cf.~\cite[3.12]{Lam-LectModRings}), a
free submodule of a free $\mathfrak{o}_{r}$-module is a direct summand.
Hence a basis for a free submodule of $V$ can always be extended
to a basis for $V$, and so there exists a basis $\{x_{1},\dots,x_{N}\}$
for $V$ such that $\{x_{1},\dots,x_{N_{i}}\}$ is a basis for $V_{i}$,
for $i=0,\ldots,e$. Then the image $\{x_{1}+\mathfrak{p}V,\dots,x_{N}+\mathfrak{p}V\}$
of this basis under the map $V\rightarrow\overline{V}$ is a basis
for $\overline{V}$ such that $\{x_{1}+\mathfrak{p}V,\dots,x_{N_{i}}+\mathfrak{p}V\}$
is a basis for $\overline{V}_{i}$, and the $\mathfrak{o}_{r}$-module
$L_{i+ej}$ has basis consisting of (the non-zero elements in) 
\begin{equation}
\{\varpi^{j}x_{1},\ldots,\varpi^{j}x_{N_{i}},\varpi^{j+1}x_{N_{i}+1},\ldots,\varpi^{j+1}x_{N}\},\label{eq:latticebasis}
\end{equation}
for $0\leq i\leq e-1$ and $0\leq j$.

We define the $\mfo_{r}$-algebras 
\begin{align*}
P & =\{x\in E\mid xV_{i}\subseteq V_{i}\text{ for all }0\leq i\leq e\},\\
\overline{P} & =\{x\in\overline{E}\mid x\overline{V_{i}}\subseteq\overline{V_{i}}\text{ for all }0\leq i\leq e\}.
\end{align*}
Algebras of this form are called \emph{parabolic subalgebras} of $E$
and $\overline{E}$, respectively. Similarly, we define the algebra
\[
\mathfrak{A}=\{x\in E\mid xL_{i}\subseteq L_{i}\text{ for all }0\leq i\leq er\},
\]
and an algebra of this form is called a \emph{parahoric subalgebra}
of $E$. The algebra $P$ has a (two-sided) ideal $I$ given by 
\[
I=\{x\in E\mid xV_{i}\subseteq V_{i+1}\text{ for all }0\leq i\leq e-1\},
\]
and the analogous ideal $\overline{I}$ in $\overline{P}$ is defined
in the obvious way. Similarly, the algebra $\mathfrak{A}$ has an
ideal $\mathfrak{P}$ given by 
\[
\mathfrak{P}=\{x\in E\mid xL_{i}\subseteq L_{i+1}\text{ for all }0\leq i\leq er-1\}.
\]
We have $I^{e}=\overline{I}\vphantom{I}^{e}=\mathfrak{P}^{er}=0$,
so the ideals are nilpotent. We remark that, since we have the periodicity
relation $L_{i+ej}=\mathfrak{p}^{j}L_{i}$, we also have 
\begin{align*}
\mathfrak{A} & =\{x\in E\mid xL_{i}\subseteq L_{i}\text{ for all }0\leq i\leq e-1\},\\
\mathfrak{P} & =\{x\in E\mid xL_{i}\subseteq L_{i+1}\text{ for all }0\leq i\leq e-1\}.
\end{align*}
Using the basis $\{x_{1},\ldots,x_{N}\}$ above to identify $V$ with
$\mathfrak{o}_{r}^{N}$, whence $E$ with $\M_{N}(\mathfrak{o}_{r})$,
these algebras and ideals have convenient matrix pictures. For example,
writing the matrix of $a\in E$ with respect to this basis as $(a_{jk})$,
we see that $a\in P$ if and only if $a_{jk}=0$ whenever there is
an integer $0\leq i\leq e-1$ such that $j>N_{i}\geq k$. Thus $P$
is the algebra of block upper-triangular matrices, with block sizes
$N_{1}':=N_{e-1}-N_{e},\ldots,N_{e}':=N_{0}-N_{1}$, while $I$ is
its ideal of strictly block upper-triangular matrices. Similarly,
the fact that~\eqref{eq:latticebasis} gives a basis for $L_{i}$
implies that $\mathfrak{A}$ is the algebra of block matrices which
are block upper-triangular modulo $\mathfrak{p}$, with the same block
sizes, while $\mathfrak{P}$ is its ideal of matrices which are strictly
block upper-triangular modulo~$\mathfrak{p}$. Writing $\M_{m\times n}(R)$
for the set of $m\times n$ matrices over a ring $R$ (not necessarily
with identity), we have 
$$
\includegraphics{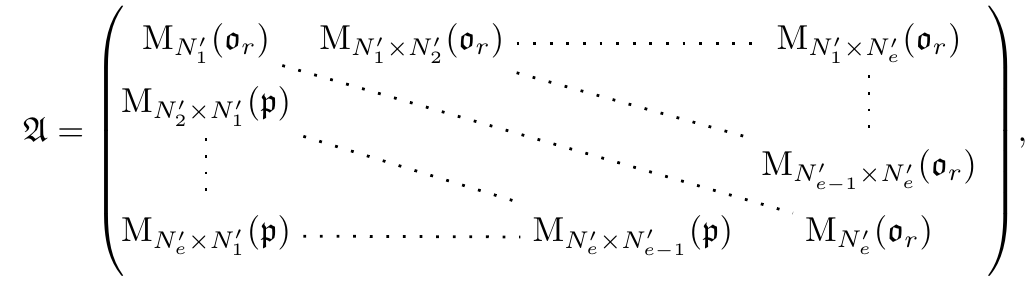}
$$
$$
\includegraphics{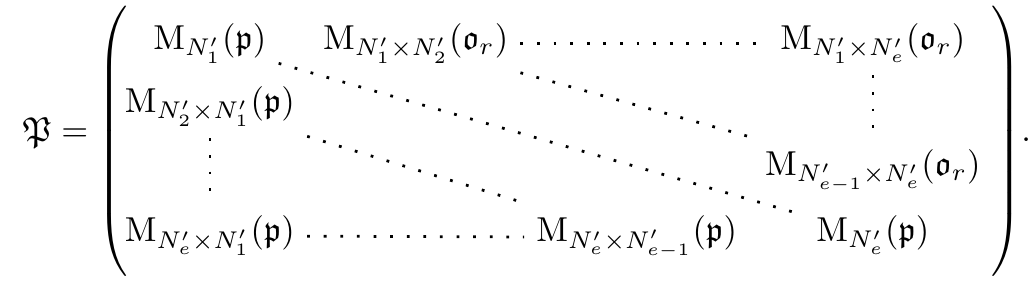}
$$
Since, with respect to the basis $\{x_{1}+\mathfrak{p}V,\dots,x_{N}+\mathfrak{p}V\}$
of $\overline{V}$, the parabolic algebra $\overline{P}$ also consists
of block upper-triangular matrices with the same block-sizes, the
map $\rho_{r,1}:E\rightarrow\overline{E}$ induces surjections $P\rightarrow\overline{P}$,
$I\rightarrow\overline{I}$, and we get:
\begin{lem}
\label{lem:P and I}~ 
\begin{enumerate}
\item \label{enu:P}$\mathfrak{A}=\rho_{r,1}^{-1}(\overline{P})=P+\mathfrak{p}E$,
\item \label{enu:I}$\mathfrak{P}=\rho_{r,1}^{-1}(\overline{I})=I+\mathfrak{p}E$. 
\end{enumerate}
\end{lem}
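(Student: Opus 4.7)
The plan is to deduce both equalities from the remark (stated just before the lemma) that $\mathfrak{A}$ and $\mathfrak{P}$ can be characterised using only the submodules $L_0,\ldots,L_{e-1}$, together with the identification $L_i = V_i + \mathfrak{p}V = \rho_{r,1}^{-1}(\overline{V}_i)$ for $0\le i\le e-1$, and the matrix pictures already described.

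For part~(i), I would first show $\mathfrak{A}=\rho_{r,1}^{-1}(\overline{P})$. An element $x\in E$ preserves $L_i=\rho_{r,1}^{-1}(\overline{V}_i)$ for $0\le i\le e-1$ if and only if $\rho_{r,1}(x)$ preserves each $\overline{V}_i$, i.e.\ if and only if $\rho_{r,1}(x)\in\overline{P}$; the cited formula for $\mathfrak{A}$ then gives the equality. Next I would prove $\mathfrak{A}=P+\mathfrak{p}E$. The inclusion $\supseteq$ is immediate: $P$ stabilises each $V_i$ (hence each $L_i$ for $0\le i\le e-1$, since $V_i\subseteq L_i$ and $\mathfrak{p}V\subseteq L_i$), and $\mathfrak{p}E\cdot L_i\subseteq\mathfrak{p}L_i=L_{i+e}\subseteq L_i$. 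For the reverse inclusion I would use that, with respect to the basis $\{x_1,\dots,x_N\}$, the map $\rho_{r,1}\colon P\to\overline{P}$ is surjective onto $\overline{P}$ (both are block upper-triangular matrices with the same block sizes $N_1',\dots,N_e'$). So given $x\in\mathfrak{A}=\rho_{r,1}^{-1}(\overline{P})$, pick $y\in P$ with $\rho_{r,1}(y)=\rho_{r,1}(x)$; then $x-y\in\ker\rho_{r,1}=\mathfrak{p}E$, yielding $x\in P+\mathfrak{p}E$.

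For part~(ii), the same strategy applies. The identity $\mathfrak{P}=\rho_{r,1}^{-1}(\overline{I})$ follows by checking that $x\in\mathfrak{P}$ (i.e.\ $xL_i\subseteq L_{i+1}$ for $0\le i\le e-1$) is equivalent to $\rho_{r,1}(x)\overline{V}_i\subseteq\overline{V}_{i+1}$ for $0\le i\le e-1$; here the only subtlety is the boundary case $i=e-1$, where $L_e=\mathfrak{p}V$ and $\overline{V}_e=0$ must be handled, but this just says $\rho_{r,1}(x)\overline{V}_{e-1}=0$, which is the correct translation. The inclusion $I+\mathfrak{p}E\subseteq\mathfrak{P}$ is direct from the definitions (and one checks the edge case $i=e-1$ using $IV_{e-1}\subseteq V_e=0$). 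Conversely, the matrix picture shows that $\rho_{r,1}\colon I\to\overline{I}$ is surjective, and the same argument as above yields $\mathfrak{P}\subseteq I+\mathfrak{p}E$.

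There is no real obstacle here; the main point to be careful about is using the periodicity $L_{i+ej}=\mathfrak{p}^jL_i$ correctly to reduce the defining conditions of $\mathfrak{A}$ and $\mathfrak{P}$ to the range $0\le i\le e-1$, and then invoking the surjectivity of $\rho_{r,1}$ from $P$ onto $\overline{P}$ (respectively $I$ onto $\overline{I}$), which is transparent in the basis $\{x_1,\dots,x_N\}$ compatible with the flag.
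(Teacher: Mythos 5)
Your proof is correct and follows essentially the same route as the paper, which deduces the lemma from the matrix pictures together with the surjectivity of $\rho_{r,1}\colon P\to\overline{P}$ and $I\to\overline{I}$ (and $\ker\rho_{r,1}=\mathfrak{p}E$). The only cosmetic difference is that you establish $\mathfrak{A}=\rho_{r,1}^{-1}(\overline{P})$ and $\mathfrak{P}=\rho_{r,1}^{-1}(\overline{I})$ intrinsically from $L_i=\rho_{r,1}^{-1}(\overline{V}_i)$ rather than reading both off the block-matrix descriptions, which is a perfectly valid (arguably cleaner, basis-free) way to get the first equality in each part.
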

We remark that Lemma~\ref{lem:P and I} implies that $\mathfrak{A}/\mathfrak{P}\cong\overline{P}/\overline{I}$,
which is semisimple, so that $\mathfrak{P}$ is the Jacobson radical
of $\mathfrak{A}$.

Although we always have a surjection $\rho_{r,1}:I^{m}\rightarrow\overline{I}\vphantom{I}^{m}$,
in general $\mathfrak{P}^{m}$ is not equal to $I^{m}+\mathfrak{p}E$,
for $m\geq2$. However, we can use the matrix description of $\mathfrak{P}$
above to obtain a similar description of $\mathfrak{P}^{m}$, by multiplying
elementary matrices with respect to the basis $\{x_{1},\ldots,x_{N}\}$.
Indeed, a straightforward induction shows that, for $m\geq0$, the
ideal $\mathfrak{P}^{m}$ consists of block matrices whose $(i,j)$-block
has entries in $\mathfrak{p}^{\lceil(m+i-j)/e\rceil}$, where $\lceil y\rceil$
denotes the least integer greater than or equal to $y$. 
\begin{lem}
\label{lem:shift} For $m\geq0$ and $0\leq k\leq e(r-1)+1-m$, we
have: 
\begin{enumerate}
\item \label{enu:shift1}$\mathfrak{P}^{m}L_{i}=L_{i+m}$, for any $i\geq0$,
\item \label{enu:shift2}$\mathfrak{P}^{m}=\{x\in E\mid xL_{i}\subseteq L_{i+m}\text{ for all }k\leq i\leq k+e-1\}$,
\item \label{enu:shift3}$\mathfrak{P}^{m}=\{x\in E\mid x\mathfrak{P}^{k}\subseteq\mathfrak{P}^{k+m}\}$. 
\end{enumerate}
\end{lem}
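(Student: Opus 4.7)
The plan is to prove (1) first via a direct induction on $m$, then derive (2) from (1) plus the matrix description of $\mathfrak{P}^m$ recalled in the paragraph before the lemma, and finally deduce (3) from (1) and (2).

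For (1), the inclusion $\mathfrak{P}^m L_i \subseteq L_{i+m}$ follows by induction on $m$ directly from the defining property $\mathfrak{P} L_j \subseteq L_{j+1}$. For the reverse inclusion, once the base case $\mathfrak{P} L_i = L_{i+1}$ is established the inductive step is automatic, since $\mathfrak{P}^{m+1} L_i = \mathfrak{P}(\mathfrak{P}^m L_i) = \mathfrak{P} L_{i+m} = L_{i+m+1}$. For the base case, I compare the bases~\eqref{eq:latticebasis} of $L_i$ and $L_{i+1}$: they differ in exactly one block of indices, where the basis vectors are scaled by an extra factor of $\varpi$. Each basis vector of $L_{i+1}$ can be produced from one of $L_i$ by applying either $\varpi I \in \mathfrak{P}$ (scaling a basis vector up by $\varpi$) or an elementary matrix $E_{k,j} \in \mathfrak{P}$ with $k$ in a strictly earlier block than $j$ (moving across blocks), giving $L_{i+1} \subseteq \mathfrak{P} L_i$.

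For (2), the inclusion $\mathfrak{P}^m \subseteq \{x \mid x L_i \subseteq L_{i+m}\text{ for }k \leq i \leq k+e-1\}$ is immediate from (1). For the reverse, suppose $x$ satisfies the condition on this range. The periodicity $L_{i+e} = \mathfrak{p} L_i$ propagates the condition forward to $x L_i \subseteq L_{i+m}$ for every $i \geq k$. Using the basis~\eqref{eq:latticebasis}, the lattice $L_i$ is generated as an $\mathfrak{o}_r$-module by the vectors $\varpi^{\lfloor (i + \beta - 1)/e \rfloor} x_j$ for $j$ in column block $\beta$, so the condition $x L_i \subseteq L_{i+m}$ forces the entry of $x$ in block position $(\alpha,\beta)$ to have $\mathfrak{p}$-adic valuation at least $\lfloor (i + m + \alpha - 1)/e \rfloor - \lfloor (i + \beta - 1)/e \rfloor$. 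As $i$ ranges over any complete residue system modulo $e$, the maximum of this difference equals $\lceil (m + \alpha - \beta)/e \rceil$, which is exactly the valuation appearing in the matrix description of $\mathfrak{P}^m$; hence $x \in \mathfrak{P}^m$. The hypothesis $k \leq e(r-1) + 1 - m$ is what guarantees that the maximum is attained at some $i \in [k, k+e-1]$ where both $L_i$ and $L_{i+m}$ retain the relevant nonzero content, so the bound is genuine rather than vacuous.

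For (3), the inclusion $\mathfrak{P}^m \subseteq \{x \mid x \mathfrak{P}^k \subseteq \mathfrak{P}^{k+m}\}$ is immediate from $\mathfrak{P}^m \mathfrak{P}^k \subseteq \mathfrak{P}^{m+k}$. Conversely, if $x \mathfrak{P}^k \subseteq \mathfrak{P}^{k+m}$, then by (1), $x L_{k+i} = x(\mathfrak{P}^k L_i) \subseteq \mathfrak{P}^{k+m} L_i = L_{k+m+i}$ for every $i \geq 0$; specialising to $i = 0, 1, \ldots, e-1$ supplies the hypothesis of (2), which delivers $x \in \mathfrak{P}^m$. The main technical obstacle lies in the matrix computation in (2): tracking how the valuations imposed on each block entry of $x$ by $e$ consecutive lattice conditions aggregate into the ceiling-function formula $\lceil (m + \alpha - \beta)/e \rceil$, and checking that the hypothesis on $k$ is precisely what ensures this supremum is attained within the meaningful range.
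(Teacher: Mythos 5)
Your proof is correct and follows essentially the same route as the paper: part (1) by producing the basis of $L_{i+m}$ from that of $L_i$ via elementary matrices in $\mathfrak{P}$, part (2) by matching the valuation/matrix description of $\mathfrak{P}^m$ against the lattice conditions over a window of $e$ consecutive indices (the paper phrases this as the range $0\leq i\leq e-1$ plus periodicity), and part (3) by combining (1) and (2) exactly as the paper does. The only cosmetic point is that in the base case of (1) you also need generators of the form $\varpi E_{k,j}\in\mathfrak{p}E\subseteq\mathfrak{P}$ with $k$ in the last block (neither $\varpi I$ applied to a basis vector of $L_i$ nor a strictly block-upper elementary matrix produces $\varpi x_k$ there), but this is immediate and at the same level of detail the paper itself leaves as ``straightforward to check.''
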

\begin{proof}
Given the description of $\mathfrak{P}^{m}$ above, it is straightforward
to check that the image of the basis~\eqref{eq:latticebasis} of~$L_{i}$
under elementary matrices in $\mathfrak{P}^{m}$ contains the basis~\eqref{eq:latticebasis}
of~$L_{i+m}$, and \ref{enu:shift1} follows. Similarly, it is straightforward
to check that the matrix description of 
\[
\{x\in E\mid xL_{i}\subseteq L_{i+m}\text{ for all }0\leq i\leq e-1\}
\]
is the same as that for $\mathfrak{P}^{m}$ above. Now \ref{enu:shift2}
follows since $xL_{i}\subseteq L_{i+m}$ if and only if $xL_{i+e}\subseteq L_{i+e+m}$,
whenever $0\leq i\leq e(r-1)-m$. Finally, for \ref{enu:shift3},
suppose $x\in E$ is such that $x\mathfrak{P}^{k}\subseteq\mathfrak{P}^{k+m}$
so that $x\mathfrak{P}^{k}L_{i}\subseteq L_{i+k+m}$, for $i=0,\ldots,e-1$.
But then \ref{enu:shift1} implies $xL_{i+k}\subseteq L_{i+k+m}$,
for $i=0,\ldots,e-1$, and \ref{enu:shift2} implies $x\in\mathfrak{P}^{m}$. 
\end{proof}
As an immediate corollary, we get:
\begin{cor}
\label{cor:AP-Formulae}~
\begin{enumerate}
\item \label{enu:Formula 1}$\mathfrak{p}\mathfrak{A}=\mathfrak{Ap}=\mathfrak{P}^{e}$, 
\item \label{enu:Formula 2}$\mathfrak{P}^{m}=\mathfrak{P}^{m+1}$ if and
only if $m\geq er$. 
\end{enumerate}
\end{cor}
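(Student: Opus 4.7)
The plan is to derive both parts directly from Lemma~\ref{lem:shift}, using the basic identity $\mathfrak{p}L_i = L_{i+e}$ built into the definition of the chain.

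For~(i), since $\mathfrak{p}$ is central in $E$ the equality $\mathfrak{p}\mathfrak{A} = \mathfrak{A}\mathfrak{p}$ is automatic, so only the identification with $\mathfrak{P}^e$ requires argument. For the inclusion $\mathfrak{p}\mathfrak{A} \subseteq \mathfrak{P}^e$, I take $y \in \mathfrak{A}$ and $\alpha \in \mathfrak{p}$ and note that $\alpha y L_i \subseteq \alpha L_i = L_{i+e}$ for every $0 \leq i \leq e-1$; Lemma~\ref{lem:shift}\ref{enu:shift2}, applied with $k=0$ and $m=e$ (admissible since $r \geq 2$), then yields $\alpha y \in \mathfrak{P}^e$. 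For the reverse inclusion, Lemma~\ref{lem:shift}\ref{enu:shift1} with $i=0$ and $m=e$ gives $\mathfrak{P}^e V = L_e = \mathfrak{p}V$, so every matrix entry of any $x \in \mathfrak{P}^e$ lies in $\mathfrak{p}$; writing $x = \varpi y$ with $y \in E$, the relation $\varpi y L_i \subseteq L_{i+e} = \varpi L_i$, combined with $\mathfrak{p}^{r-1} V \subseteq \mathfrak{p} V \subseteq L_i$ for $0 \leq i \leq e-1$ (so that the $\varpi$-torsion in $V$ is absorbed by $L_i$), forces $y L_i \subseteq L_i$ for these $i$. The periodicity remark preceding the lemma then gives $y \in \mathfrak{A}$.

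For~(ii), the direction $m \geq er \Rightarrow \mathfrak{P}^m = \mathfrak{P}^{m+1}$ is immediate from the observation $\mathfrak{P}^{er} = 0$ already recorded in the text. For the converse, I will establish $\mathfrak{P}^m L_0 = L_m$ for every $m \geq 0$: the range $m \leq e(r-1)+1$ is a direct application of Lemma~\ref{lem:shift}\ref{enu:shift1} at $i=0$, and the remaining values of $m$ follow by a one-step induction using $\mathfrak{P} L_i = L_{i+1}$ (the same part with $m=1$, valid for every $i$). Since $L_m \neq L_{m+1}$ whenever $m < er$ by the strictness of the chain $L_0 \supset L_1 \supset \cdots \supset L_{er} = 0$, this forces $\mathfrak{P}^m \neq \mathfrak{P}^{m+1}$.

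The only spot requiring real care is the reverse inclusion in~(i), where one has to absorb $\varpi$-torsion into $L_i$; the rest is routine bookkeeping with the preceding lemma.
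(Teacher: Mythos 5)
Your proof is correct. The paper itself offers no written argument (the result is stated as ``an immediate corollary'' of Lemma~\ref{lem:shift}), and the derivation the authors have in mind is almost certainly the matrix description recorded just before that lemma: the $(i,j)$-block of $\mathfrak{P}^{m}$ has entries in $\mathfrak{p}^{\lceil(m+i-j)/e\rceil}$, so $\mathfrak{P}^{e}=\mathfrak{p}\mathfrak{A}$ is a one-line computation and the strict descent of $\mathfrak{P}^{m}$ exactly for $m<er$ is read off from when some exponent $\lceil(m+i-j)/e\rceil$ increases while staying below $r$. You instead work purely with the module-theoretic statements of Lemma~\ref{lem:shift}, and your route is sound: the inclusion $\mathfrak{p}\mathfrak{A}\subseteq\mathfrak{P}^{e}$ via part~\ref{enu:shift2} is fine (though $\alpha L_{i}=L_{i+e}$ should read $\alpha L_{i}\subseteq L_{i+e}$ for a general $\alpha\in\mathfrak{p}$ --- only the inclusion is needed); the reverse inclusion genuinely requires the torsion-absorption step you isolate, since $\varpi(yv)\in\varpi L_{i}$ only gives $yv\in L_{i}+\mathfrak{p}^{r-1}V$, and you correctly note $\mathfrak{p}^{r-1}V\subseteq L_{e}\subseteq L_{i}$; and for~(ii) the identity $\mathfrak{P}^{m}L_{0}=L_{m}$ together with the strictness of the lattice chain is a clean way to get the converse. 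The trade-off is that your argument is longer but coordinate-free, whereas the intended matrix-entry check is shorter but leans on the unproved (``straightforward induction'') description of $\mathfrak{P}^{m}$.
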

Note also that we have $\mathfrak{P}^{e(r-1)}=\mathfrak{p}^{r-1}P$
and $\mathfrak{P}^{e(r-1)+1}=\mathfrak{p}^{r-1}I$. Let $\tr:E\rightarrow\mathfrak{o}_{r}$
denote the trace map. 
\begin{lem}
\label{lem:Trace-lemma}Let $x\in E$, and let $m$ be an integer
such that $0\leq m\leq e(r-1)+1$. Then $\tr(\mathfrak{P}^{m}x)=\{0\}$
if and only if $x\in\mathfrak{P}^{e(r-1)+1-m}$. 
\end{lem}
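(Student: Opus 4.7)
The plan is to exploit the explicit block-matrix description of $\mathfrak{P}^m$ recalled just before the statement: with respect to the blocking of sizes $N_1',\ldots,N_e'$ coming from the basis~\eqref{eq:latticebasis}, the ideal $\mathfrak{P}^m$ consists of the matrices whose $(i,j)$-block has entries in $\mathfrak{p}^{\lceil(m+i-j)/e\rceil}$ (with the usual convention $\mathfrak{p}^k=\mathfrak{o}_r$ for $k\le0$). The lemma is then a duality statement for the trace pairing that reduces, entry by entry, to the non-degeneracy of multiplication $\mathfrak{p}^a\times\mathfrak{o}_r\to\mathfrak{o}_r$: the $\mathfrak{o}_r$-annihilator of $\mathfrak{p}^a$ is exactly $\mathfrak{p}^{r-a}$ when $0\le a\le r$.

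For the ``if'' direction, assume $x\in\mathfrak{P}^{e(r-1)+1-m}$. Then $\mathfrak{P}^m x\subseteq\mathfrak{P}^{e(r-1)+1}=\mathfrak{p}^{r-1}I$, the last equality being stated between Corollary~\ref{cor:AP-Formulae} and the present lemma. Since $I$ is strictly block upper-triangular, every element of $\mathfrak{p}^{r-1}I$ has zero block-diagonal blocks, hence trace~$0$, which gives $\tr(\mathfrak{P}^m x)=\{0\}$.

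For the converse, writing $y=(y_{ij})$ and $x=(x_{ij})$ in block form yields
\[
\tr(yx)=\sum_{i,j}\tr(y_{ij}x_{ji}).
\]
Since the blocks of a general $y\in\mathfrak{P}^m$ may be chosen independently in $\M_{N_i'\times N_j'}(\mathfrak{p}^{\lceil(m+i-j)/e\rceil})$, taking $y$ to have a single nonzero entry forces, for each pair $(i,j)$, every entry of $x_{ji}$ to annihilate $\mathfrak{p}^{\lceil(m+i-j)/e\rceil}$, i.e.\ to lie in $\mathfrak{p}^{r-\lceil(m+i-j)/e\rceil}$. After relabelling $(i,j)\mapsto(j,i)$, the $(i,j)$-block of $x$ has entries in $\mathfrak{p}^{r-\lceil(m+j-i)/e\rceil}$, which, by the matrix description of $\mathfrak{P}^{e(r-1)+1-m}$, exactly says $x\in\mathfrak{P}^{e(r-1)+1-m}$ provided the ceiling identity
\[
r-\left\lceil\tfrac{m+j-i}{e}\right\rceil=\left\lceil\tfrac{e(r-1)+1-m+i-j}{e}\right\rceil
\]
holds. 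Writing the right-hand side as $(r-1)+\lceil(1-c)/e\rceil$ with $c=m+j-i$ reduces this to $\lceil c/e\rceil+\lceil(1-c)/e\rceil=1$ for every integer $c$, which is immediate on dividing $c=qe+s$ with $0\le s<e$.

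The only real obstacle is bookkeeping: the index swap $(i,j)\leftrightarrow(j,i)$ produced by the trace pairing, combined with executing the ceiling identity cleanly. The hypothesis $0\le m\le e(r-1)+1$ is precisely what keeps all the relevant exponents in the range $[0,r]$ where the annihilator description $\mathrm{ann}_{\mathfrak{o}_r}(\mathfrak{p}^a)=\mathfrak{p}^{r-a}$ applies at each step.
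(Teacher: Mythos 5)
Your proof is correct, but it follows a genuinely different route from the paper's. The paper first settles the case $m=0$ structurally: from $\tr(\mathfrak{A}x)=\{0\}$ it deduces $x\in\mathfrak{p}^{r-1}E$ (using $\mathfrak{p}E\subset\mathfrak{A}$ and non-degeneracy of the trace form on $E$) and $x\in I$ modulo $\mathfrak{p}^{r-1}E$ (using $P\subset\mathfrak{A}$ and the fact that $\tr(P\gamma)=\{0\}$ forces $\gamma\in I$), whence $x\in\mathfrak{p}^{r-1}I=\mathfrak{P}^{e(r-1)+1}$; it then bootstraps to general $m$ by observing that $\tr(\mathfrak{P}^m x)=\{0\}$ gives $\mathfrak{P}^m x\subseteq\mathfrak{P}^{e(r-1)+1}$ and invoking Lemma~\ref{lem:shift}\,\ref{enu:shift3}. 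You instead prove all $m$ at once by a direct blockwise computation with the explicit description of $\mathfrak{P}^m$, reducing to $\mathrm{ann}_{\mathfrak{o}_r}(\mathfrak{p}^a)=\mathfrak{p}^{r-a}$ and the ceiling identity $\lceil c/e\rceil+\lceil(1-c)/e\rceil=1$; your range check that the hypothesis on $m$ keeps all exponents in $[0,r]$, and your verification of the ceiling identity, are both correct. Your version is self-contained and in fact computes the orthogonal complement block by block, at the cost of the index/ceiling bookkeeping; the paper's version avoids all explicit matrix arithmetic in this proof by leaning on the shift lemma and the parabolic facts $\tr(P\gamma)=\{0\}\Rightarrow\gamma\in I$, which keeps the argument shorter and reusable in spirit for the surrounding duality statements (Lemma~\ref{lem:characters}).
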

\begin{proof}
Note that one implication is clear, since $\mathfrak{P}^{e(r-1)+1}\subseteq I$
so $\tr(\mathfrak{P}^{e(r-1)+1})=0$.

We first prove the opposite implication for $m=0$, so we assume that
$x\in E$ is such that $\tr(\mathfrak{A}x)=\{0\}$. It is easy to
show (e.g.~using elementary matrix considerations) that the trace
form $E\times E\rightarrow\mathfrak{o}_{r}$ given by $(\alpha,\beta)\mapsto\tr(\alpha\beta)$
is non-degenerate. Similarly, it is also easy to show that for $\gamma\in E$,
the condition $\tr(P\gamma)=\{0\}$ implies that $\gamma\in I$. Hence,
since $\mathfrak{p}E\subset\mathfrak{A}$, the condition $\tr(\mathfrak{A}x)=\{0\}$
implies that $x\in\mathfrak{p}^{r-1}E$. Furthermore, since $P\subset\mathfrak{A}$,
the condition $\tr(\mathfrak{A}x)=\{0\}$ implies that $x\in I\cap\mathfrak{p}^{r-1}E=\mathfrak{p}^{r-1}I=\mathfrak{P}^{e(r-1)+1}$,
as required.

Now suppose $x\in E$ is such that $\tr(\mathfrak{P}^{m}x)=\{0\}$.
Then $\tr(\mathfrak{A}(\mathfrak{P}^{m}x))=\{0\}$ so the case $m=0$
implies that $\mathfrak{P}^{m}x\subseteq\mathfrak{P}^{e(r-1)+1}$.
Now Lemma~\ref{lem:shift},~\ref{enu:shift3} implies that $x\in\mathfrak{P}^{e(r-1)+1-m}$,
as required. 
\end{proof}
Define the groups 
\[
U=U^{0}=\mathfrak{A}^{\times},\quad U^{m}=1+\mathfrak{P}^{m},\text{ for }m\geq1.
\]
The group $U$ is called a \emph{parahoric subgroup }of $E^{\times}$.
We have a filtration 
\[
U\supset U^{1}\supset\dots\supset U^{er-1}\supset U^{er}=\{1\},
\]
where the inclusions are strict thanks to Lemma~\ref{cor:AP-Formulae},
\ref{enu:Formula 2}. We also define $U^{i}=\{1\}$ for $i>er$.

Since $\mathfrak{P}$ is a (two-sided) ideal in $\mathfrak{A}$, each
group $U^{m}$ is normal in $U$. Moreover, if $1+x\in U^{m}$, and
$1+y\in U^{n}$, then 
\[
(1+x)(1+y)\equiv1+x+y\equiv(1+y)(1+x)\pmod{\mathfrak{P}^{m+n}},
\]
so we have the commutator relation 
\[
[U^{m},U^{n}]\subseteq U^{m+n}.
\]
Thus in particular, the group $U^{m}$ is abelian whenever $m\geq er/2$,
that is, when $m\geq\lceil\frac{er}{2}\rceil$.

For every $m\geq1$ we have an isomorphism 
\[
U^{m}/U^{m+1}\longiso\mathfrak{P}^{m}/\mathfrak{P}^{m+1},\qquad(1+x)U^{m+1}\longmapsto x+\mathfrak{P}^{m+1},
\]
and since $\mathfrak{p}\mathfrak{P}^{m}\subseteq\mathfrak{P}^{m+e}\subseteq\mathfrak{P}^{m+1}$,
we have an action of $\mathfrak{o}_{r}/\mathfrak{p}\cong\F_{q}$ on
$\mathfrak{P}^{m}/\mathfrak{P}^{m+1}$, for each $m\geq0$. This makes
$\mathfrak{P}^{m}/\mathfrak{P}^{m+1}$ a finite dimensional vector
space over the finite field $\F_{q}$, where the action of $\F_{q}$
is compatible with the group structure. Hence $\mathfrak{P}^{m}/\mathfrak{P}^{m+1}$
is an elementary abelian group.

By choosing a basis, we may identify $V$ with $\mathfrak{o}_{r}^{N}$,
$\overline{V}$ with $\F_{q}^{N}$, $E$ with $\M_{N}(\mathfrak{o}_{r})$,
$\overline{E}$ with $\M_{N}(\F_{q})$, and $E^{\times}$ with $G_{r}$.
These identifications will remain in force throughout the rest of
this paper. From now on, let $\Omega_{r}\subset\mfg_{r}$ be an orbit
under the adjoint (conjugation) action of $G_{r}$. Write $\Omega_{1}$
for the image of $\Omega_{r}$ in $\mfg_{1}$. We will associate a
certain parahoric subalgebra to $\Omega_{1}$, which will be denoted
by $\Amin$. Let 
\[
\prod_{i=1}^{h}f_{i}(x)^{m_{i}}\in\F_{q}[x]
\]
be the characteristic polynomial of $\Omega_{1}$ (i.e., the characteristic
polynomial of any element in $\Omega_{1}$), where the $f_{i}(x)$
are distinct and irreducible of degree $d_{i}$, for $i=1,\dots,h$.
This determines a partition of $N$:
\[
\lambda=(d_{1}^{m_{1}},\dots,d_{h}^{m_{h}})=(\underbrace{d_{1},d_{1},\dots,d_{1}}_{m_{1}\text{ times}},\dots,\underbrace{d_{h},d_{h},\dots,d_{h}}_{m_{h}\text{ times}}).
\]
We define $\Amin\subseteq\mfg_{r}$ to be the preimage of the standard
parabolic subalgebra of $\mfg_{1}$ corresponding to $\lambda$ (i.e.,
the block upper-triangular subalgebra whose block sizes are given
by $\lambda$, in the order given above). Moreover, we let $\Amax=\mfg_{r}=\M_{N}(\mfo_{r})$
be the full matrix algebra. Let $\Pmin$ and $\Pmax$ be the corresponding
Jacobson radicals of  $\Amin$ and $\Amax$, respectively. Then $\Pmin$
is the pre-image under $\rho_{r,1}:\mfg_{r}\rightarrow\mfg_{1}$ of
the strict block-upper subalgebra of $\Amin$, and similarly $\Pmax$
is the pre-image of $0$, that is, $\Pmax=\mfp\mfg_{r}$. For $*\in\{\mathrm{m},\mathrm{M}\}$
we have the corresponding groups 
\[
U_{*}=U_{*}^{0}=\mfA_{*}^{\times},\quad U_{*}^{i}=1+\mfP_{*}^{i},\qquad\text{for }i\geq1,
\]
and the filtrations 
\[
U_{*}\supset U_{*}^{1}\supset\dots\supset U_{*}^{e_{*}r}=\{1\},
\]
where $e_{*}=e(\mfA_{*})$. Note that $\Umax^{i}=K^{i}$ and $\emax=1$,
while $\emin=m_{1}+\dots+m_{h}$ is the number of blocks in $\Amin$.

By definition, we have $\Amax\supseteq\Amin$, and the label $\mathrm{m}$
here stands for ``minimal'', while $\mathrm{M}$ stands for ``maximal''.
From the definitions, we have 
\begin{align*}
\Umin/\Umin^{1} & \cong\prod_{i=1}^{h}\GL_{d_{i}}(\F_{q}))^{m_{i}},\quad & \Amin/\Pmin & \cong\prod_{i=1}^{h}\M_{d_{i}}(\F_{q})^{m_{i}},\\
\Umax/\Umax^{1} & =G_{r}/K^{1}\cong G_{1}, & \Amax/\Pmax & \cong\mfg_{1}.
\end{align*}
Note that if $\Omega_{1}$ has irreducible characteristic polynomial,
then $\Amin=\Amax=\mfg_{r}$.

Given an element $\beta\in\mfg_{r}$ we will denote its image in $\mfg_{1}$
by $\bar{\beta}$. Similarly, if $\beta\in\Omega_{r}\cap\Amin$, we
will let $\betam$ denote the image of $\beta$ in $\Amin/\Pmin$.
Note that by, for example, the rational Jordan normal form, $\Omega_{r}\cap\Amin$
is non-empty. Up to $G_{r}$-conjugation we can also arrange the diagonal
irreducible blocks of $\bar{\beta}$, and hence of $\betam$ in any
order. In particular, we can find a $\beta\in\Omega_{r}\cap\Amin$
such that
\begin{equation}
\betam=\underbrace{\betam^{1}\oplus\dots\oplus\betam^{1}}_{m_{1}\text{ times}}\oplus\dots\oplus\underbrace{\betam^{h}\oplus\dots\oplus\betam^{h}}_{m_{h}\text{ times}},\label{eq:betam}
\end{equation}
where each $\betam^{i}\in\M_{d_{i}}(\F_{q})$ has irreducible characteristic
polynomial $f_{i}(x)$. 

There are several equivalent characterisations of regular elements
in $\mfg_{1}$. One of the simplest is that an element in $\mfg_{1}$
is regular if its centraliser in $\mfg_{1}$ has dimension $N$ (as
$\F_{q}$-vector space).\emph{ }One can also define regular elements
in $\mfg_{i}$ for $r\geq i>1$, as those elements whose centraliser
in $\mfg_{i}$ has $\mfo_{i}$-rank $N$. A result of Hill \cite[Theorem~3.6]{Hill_regular}
implies that an element in $\mfg_{i}$ is regular if and only if its
image in $\mfg_{1}$ is regular.
\begin{lem}
\label{lem:Centr-in-Am}Let $\beta\in\Omega_{r}\cap\Amin$. If $\beta$
is regular, then $C_{G_{r}}(\beta)\subseteq\Amin^{\times}$.
\end{lem}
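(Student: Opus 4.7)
The plan is to reduce the statement to the analogous claim over the residue field and then exploit the standard description of the centraliser of a regular (equivalently, non-derogatory) matrix.

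First, I would observe that it suffices to show $C_{\mfg_1}(\bar\beta)\subseteq\overline{\Amin}$, where $\overline{\Amin}=\rho_{r,1}(\Amin)$ is the standard parabolic subalgebra of $\mfg_1$ associated with $\lambda$. Indeed, if $g\in G_r$ satisfies $g\beta=\beta g$, then reducing modulo $\mfp$ gives $\bar g\bar\beta=\bar\beta\bar g$, so $\bar g\in C_{\mfg_1}(\bar\beta)$. Assuming the containment above, $\bar g\in\overline{\Amin}$, and by Lemma~\ref{lem:P and I}\emph{(1)} (applied to the parahoric $\Amin$) we have $\Amin=\rho_{r,1}^{-1}(\overline{\Amin})$, so $g\in\Amin$. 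Since $g\in G_r$ is a unit, $g\in\Amin^\times$.

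Next, I would use the hypothesis that $\beta$ is regular. By the cited result of Hill, regularity of $\beta\in\mfg_r$ is equivalent to regularity of $\bar\beta\in\mfg_1$, which in turn is equivalent to $\bar\beta$ being non-derogatory, i.e.\ its minimal polynomial equals its characteristic polynomial $\prod_{i=1}^h f_i(x)^{m_i}$. For a non-derogatory matrix, a standard linear algebra fact (e.g.\ the rational canonical form argument) gives
\[
C_{\mfg_1}(\bar\beta)=\F_q[\bar\beta],
\]
which has $\F_q$-dimension $N$, agreeing with the dimension of the centraliser of a regular element.

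Finally, since $\beta\in\Omega_r\cap\Amin$, we have $\bar\beta\in\overline{\Amin}$. As $\overline{\Amin}$ is a unital $\F_q$-subalgebra of $\mfg_1$, it contains every polynomial in $\bar\beta$ with coefficients in $\F_q$, so
\[
\F_q[\bar\beta]\subseteq\overline{\Amin}.
\]
Combining with the previous step yields $C_{\mfg_1}(\bar\beta)\subseteq\overline{\Amin}$, completing the reduction.

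There is no real obstacle here: the only non-trivial ingredient is the equivalence between regularity in $\mfg_r$ and in $\mfg_1$, which is quoted, together with the classical identification of the centraliser of a non-derogatory matrix with the polynomial algebra it generates. Everything else is formal, relying only on the fact that $\Amin$ is a unital subalgebra of $\mfg_r$ that is the full preimage of a subalgebra of $\mfg_1$.
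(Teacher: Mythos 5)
Your proof is correct, but it takes a slightly different route from the paper's. The paper argues directly over $\mfo_{r}$: by Hill's result (quoted later, in Section~\ref{sec:Construction}, as \cite[Corollary~3.7]{Hill_regular}), regularity of $\beta$ gives $C_{G_{r}}(\beta)=\mfo_{r}[\beta]^{\times}$, and since $\Amin$ is a unital $\mfo_{r}$-subalgebra containing $\beta$, it contains $\mfo_{r}[\beta]$ and hence the centraliser; that is the whole proof. You instead reduce modulo $\mfp$ first, use only the classical field-level fact $C_{\mfg_{1}}(\bar\beta)=\F_{q}[\bar\beta]$ for a non-derogatory matrix, and then pull back using the fact that $\Amin$ is \emph{by definition} the full preimage $\rho_{r,1}^{-1}(\overline{\Amin})$ of the standard parabolic. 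Both hinge on the same principle (the centraliser of a regular element is the polynomial algebra it generates, which sits inside any unital subalgebra containing the element); yours trades Hill's statement over the ring $\mfo_{r}$ for the elementary statement over $\F_{q}$ plus the preimage property of $\Amin$, while the paper's is a one-liner given Hill's corollary. One small point you gloss over (as does the paper): passing from $g\in\Amin\cap G_{r}$ to $g\in\Amin^{\times}$ needs the observation that an element of a parahoric (or parabolic preimage) subalgebra that is invertible in $E$ has its inverse again in that subalgebra — immediate since $g$ permutes each finite module $L_{i}$ bijectively, or since $g^{-1}$ also centralises $\beta$.
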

\begin{proof}
If $\beta$ is regular, we have $C_{G_{r}}(\beta)=\mfo_{r}[\beta]^{\times}$,
so $\beta\in\Amin$ implies that $C_{G_{r}}(\beta)\subset\Amin$,
since $\Amin$ is an algebra.
\end{proof}
\begin{lem}
\label{lem:Centr_A/P-orders}We have
\[
|C_{\Amin/\Pmin}(\betam)|=q^{N}=|C_{\mfg_{1}}(\bar{\beta})|.
\]
\end{lem}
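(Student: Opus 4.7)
The statement splits cleanly into two independent equalities, and I would address them separately.

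For the first equality $|C_{\Amin/\Pmin}(\betam)|=q^{N}$, the plan is to exploit the ring isomorphism $\Amin/\Pmin\cong\prod_{i=1}^{h}\M_{d_{i}}(\F_{q})^{m_{i}}$ recorded earlier, together with the explicit block-diagonal form of $\betam$ given in \eqref{eq:betam}. Centralisers in a product algebra are the product of centralisers in each factor, and within a single factor $\M_{d_{i}}(\F_{q})^{m_{i}}$ the centraliser of the diagonal element $\betam^{i}\oplus\cdots\oplus\betam^{i}$ is simply $C_{\M_{d_{i}}(\F_{q})}(\betam^{i})^{m_{i}}$. So
$$
C_{\Amin/\Pmin}(\betam)=\prod_{i=1}^{h}C_{\M_{d_{i}}(\F_{q})}(\betam^{i})^{m_{i}}.
$$

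The next step is to note that each $\betam^{i}\in\M_{d_{i}}(\F_{q})$ has irreducible characteristic polynomial $f_{i}(x)$ of degree $d_{i}$, which forces its minimal polynomial to coincide with its characteristic polynomial, so $\betam^{i}$ is cyclic. Consequently, $C_{\M_{d_{i}}(\F_{q})}(\betam^{i})=\F_{q}[\betam^{i}]\cong\F_{q}[x]/(f_{i}(x))\cong\F_{q^{d_{i}}}$, a field of order $q^{d_{i}}$. Multiplying out gives $\prod_{i}q^{d_{i}m_{i}}=q^{\sum_{i}d_{i}m_{i}}=q^{N}$, as desired.

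For the second equality $|C_{\mfg_{1}}(\bar\beta)|=q^{N}$, I would invoke regularity of $\bar\beta$: by the equivalent characterisation of regular elements recorded in the paragraph preceding Lemma~\ref{lem:Centr-in-Am} (together with Hill's theorem, also cited there, which carries regularity between $\mfg_{r}$ and $\mfg_{1}$), $\bar\beta$ regular means $C_{\mfg_{1}}(\bar\beta)$ is an $\F_{q}$-vector space of dimension $N$, hence of order $q^{N}$.

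There is no real obstacle here. The only small point worth checking is that the ordering of the blocks of $\betam$ chosen in \eqref{eq:betam} is compatible with the ordering in the definition of $\Amin$, which is immediate from the construction. The content of the lemma is really the useful observation that, for a regular element, the quotient $\Amin/\Pmin$ already detects the full centraliser order, even though it is a strict quotient of $\Amin$.
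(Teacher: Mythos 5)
Your proposal is correct and follows essentially the same route as the paper: decompose $C_{\Amin/\Pmin}(\betam)$ via the isomorphism $\Amin/\Pmin\cong\prod_{i=1}^{h}\M_{d_{i}}(\F_{q})^{m_{i}}$ into $\prod_{i=1}^{h}C_{\M_{d_{i}}(\F_{q})}(\betam^{i})^{m_{i}}$, identify each factor as the field $\F_{q}[\betam^{i}]\cong\F_{q^{d_{i}}}$, and obtain the second equality directly from the definition of regularity of $\bar{\beta}$. The only difference is that you spell out the field-extension argument for $|C_{\M_{d_{i}}(\F_{q})}(\betam^{i})|=q^{d_{i}}$, which the paper leaves implicit here (and records explicitly in the proof of Lemma~\ref{lem:p-Sylow}).
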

\begin{proof}
The isomorphism $\Amin/\Pmin\cong\prod_{i=1}^{h}\M_{d_{i}}(\F_{q})^{m_{i}},$
induces an isomorphism 
\[
C_{\Amin/\Pmin}(\betam)\cong\prod_{i=1}^{h}C_{\M_{d_{i}}(\F_{q})}(\betam^{i})^{m_{i}},
\]
so $|C_{\Amin/\Pmin}(\betam)|=\prod_{i=1}^{h}q^{d_{i}m_{i}}=q^{N}$.
The second equality follows by definition of regularity of $\bar{\beta}$.
\end{proof}
Set $l=\lceil\frac{r}{2}\rceil$, $l'=\lfloor\frac{r}{2}\rfloor$,
so that $l+l'=r$. The relations $\Amax\supseteq\Amin\supseteq\Pmin\supseteq\Pmax$
imply that for every $i\geq1$, $\Pmax^{i}=\mfp^{i}\mfg_{r}$ is a
two-sided ideal in $\Amin$. For $\beta\in\Omega_{r}\cap\mfA$ and
$*\in\{\mathrm{m},\mathrm{M}\}$, we can therefore define the following
groups
\begin{align*}
C & =C_{G_{r}}(\beta),\\
J_{*}^{1} & =(C\cap U_{*}^{1})U_{*}^{e_{*}l'},\\
H_{*}^{1} & =(C\cap U_{*}^{1})U_{*}^{e_{*}l'+1}.
\end{align*}
Note that $\Jmax^{1}=(C\cap K^{1})K^{l'}$ and $\Hmax^{1}=(C\cap K^{1})K^{l}$,
and that both of these groups are normalised by $CK^{l'}$, since
$C$ normalises both $K^{1}$ and $K^{l'}$, and $[K^{l'},K^{1}]\subseteq K^{l}\subseteq K^{l'}$.
Moreover, we define the group
\[
\JmM=(C\cap\Umin^{1})K^{l'}.
\]
We have the following diagram of subgroups, where the vertical and
slanted lines denote inclusions (we have only indicated the inclusions
which are relevant to us and repeat the definitions of the groups,
for the reader's convenience).
$$
\includegraphics{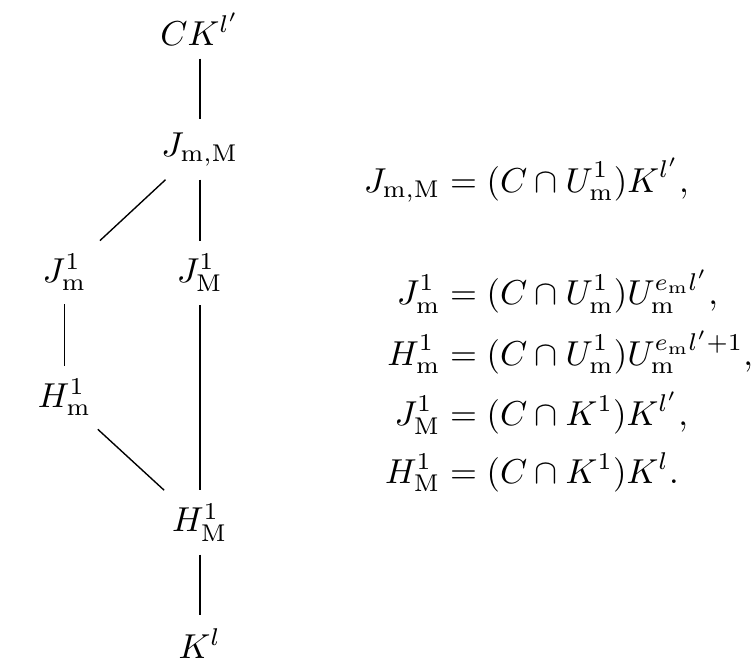}
$$
We explain the non-trivial inclusions in the above diagram. We have
$\Pmin\supseteq\Pmax$, and so $\Umin^{1}\supseteq K^{1}$. By Corollary~\ref{cor:AP-Formulae},
we get
\[
\Umin^{e_{\mathrm{m}}l'+1}=1+\mfp^{l'}\Pmin\supseteq1+\mfp^{l'}\Pmax=K^{l};
\]
thus $\Hmin^{1}\supseteq\Hmax^{1}$. Moreover, 
\[
\Umin^{e_{\mathrm{m}}l'}=1+\mfp^{l'}\Amin\subseteq1+\mfp^{l'}\Amax=K^{l'},
\]
so $\JmM$ contains both $\Jmin^{1}$ and $\Jmax^{1}$ as subgroups. 

The following lemma will be a crucial step in the construction of
representations, and is the main reason why we work with the algebra
$\Amin$ and its associated subgroups. 
\begin{lem}
\label{lem:p-Sylow}Suppose that $\Omega_{r}$ consists of regular
elements. Then there exists a $\beta\in\Omega_{r}$ such that $\JmM$
is a Sylow $p$-subgroup of $CK^{l'}$. 
\end{lem}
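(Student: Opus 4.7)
The plan is to pick $\beta$ so that its centraliser $C$ lies in $\Amin^\times$ and has small, prime-to-$p$ image in the reductive quotient $\Umin/\Umin^1$; combining this with the fact that $\JmM$ is visibly a $p$-group will yield the Sylow statement.

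First, I would take $\beta\in\Omega_r\cap\Amin$ with $\betam$ in the block form~\eqref{eq:betam}; such a $\beta$ exists by the rational Jordan normal form, as noted in the text. Since $\beta$ is regular, Lemma~\ref{lem:Centr-in-Am} gives $C=\mfo_r[\beta]^\times\subseteq\Amin^\times=\Umin$. Moreover $\JmM=(C\cap\Umin^1)K^{l'}$ is a $p$-group, because $\Umin^1=1+\Pmin$ and $K^{l'}=1+\mfp^{l'}\mfg_r$ are in bijection, as sets, with finite $\mfo_r$-modules of $p$-power order, and $K^{l'}$ is normal in $G_r$ so that $\JmM$ is a subgroup of their product.

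The main computation is the index $[CK^{l'}:\JmM]$. Because $\mfp\mfg_r\subseteq\Pmin$ and $l'\geq 1$, we have $K^{l'}\subseteq\Umin^1$, and so $(C\cap\Umin^1)\cap K^{l'}=C\cap K^{l'}$. A standard index manipulation (using that $K^{l'}$ is normal in $CK^{l'}$) then yields
\[
[CK^{l'}:\JmM]=\bigl[C/(C\cap K^{l'}):(C\cap\Umin^1)/(C\cap K^{l'})\bigr]=|C/(C\cap\Umin^1)|,
\]
which equals the order of the image of $C$ in $\Umin/\Umin^1\cong(\Amin/\Pmin)^\times$.

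The decisive step, and the only place where regularity is really used, is to identify this image with $\F_q[\betam]^\times$. The surjection $\mfo_r[\beta]\twoheadrightarrow\F_q[\betam]$ is a morphism of finite commutative rings whose kernel is contained in the Jacobson radical of $\mfo_r[\beta]$ (since every maximal ideal of $\mfo_r[\beta]$ contains $\mfp\,\mfo_r[\beta]$); hence units lift through this surjection, so that the image of $C=\mfo_r[\beta]^\times$ is exactly $\F_q[\betam]^\times$. Since $\betam$ has minimal polynomial $\prod_i f_i(x)$ (a product of distinct irreducibles), the Chinese Remainder Theorem gives $\F_q[\betam]\cong\prod_{i=1}^h\F_{q^{d_i}}$, whence
\[
[CK^{l'}:\JmM]=|\F_q[\betam]^\times|=\prod_{i=1}^h(q^{d_i}-1),
\]
which is coprime to $p$. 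I expect this identification to be the main obstacle: for non-regular $\beta$ the centraliser is strictly larger than $\mfo_r[\beta]^\times$, and its image in $\Umin/\Umin^1$ generally contains unipotent elements, ruining the prime-to-$p$ conclusion; the assumption that $\Omega_r$ consists of regular elements is precisely what is needed to keep the index in the finite étale part of $\Umin$.
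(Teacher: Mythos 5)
Your proposal is correct and takes essentially the same route as the paper: the same choice of $\beta$ via the rational Jordan form, the same index manipulation reducing $[CK^{l'}:\JmM]$ to $|C/(C\cap\Umin^{1})|$, and the same use of regularity through $C=\mfo_{r}[\beta]^{\times}\subseteq\Umin$. The only variation is at the last step, where you identify the image of $C$ in $\Umin/\Umin^{1}$ with $\F_{q}[\betam]^{\times}\cong\prod_{i}\F_{q^{d_{i}}}^{\times}$ (note that for the coprimality to $p$ you only need the containment of the image in $\F_{q}[\betam]^{\times}$, which is immediate, so the unit-lifting argument is not actually required), whereas the paper compares the quotient with the block-wise centralisers $\prod_{i}C_{\GL_{d_{i}}(\F_{q})}(\betam^{i})^{m_{i}}$; both have order prime to $p$.
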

\begin{proof}
By the rational Jordan normal form, there is a $\beta\in\Omega_{r}\cap\Amin$.
Moreover, any permutation of the diagonal blocks can be achieved by
$G_{r}$-conjugation, so there is a $\beta\in\Omega_{r}\cap\Amin$
such that $\betam\in\Amin/\Pmin$ satisfies \eqref{eq:betam}. Assume
now that $\beta$ is chosen in this way. We have 
\begin{align*}
[CK^{l'}:\JmM] & =\frac{|CK^{l'}|/|K^{l'}|}{|\JmM|/|K^{l'}|}=\frac{|C/(C\cap K^{l'})|}{|(C\cap\Umin^{1})/(C\cap K^{l'})|}=\frac{|C|}{|C\cap\Umin^{1}|}.
\end{align*}
Thus we need to show that $C\cap\Umin^{1}$ is a Sylow $p$-subgroup
in $C$. Since $\beta$ is regular, $C$ is abelian, and by Lemma~\ref{lem:Centr-in-Am}
we have $C\subseteq\Umin=\Amin^{\times}$, so we have
\[
\frac{C}{C\cap\Umin^{1}}=\frac{C\cap\Umin}{C\cap\Umin^{1}}.
\]
Then the isomorphism $\Umin/\Umin^{1}\cong\prod_{i=1}^{h}\GL_{d_{i}}(\F_{q})^{m_{i}}$
induces an isomorphism
\[
\frac{C\cap\Umin}{C\cap\Umin^{1}}\cong\prod_{i=1}^{h}C_{\GL_{d_{i}}(\F_{q})}(\betam^{i})^{m_{i}}.
\]
Each $\betam^{i}$ has irreducible characteristic polynomial over
$\F_{q}$, so $\F_{q}[\betam^{i}]/\F_{q}$ is a field extension of
degree $d_{i}$. Since $C_{\GL_{d_{i}}(\F_{q})}(\betam^{i})=\F_{q}[\betam^{i}]^{\times}$,
we conclude that $p$ does not divide the order of $C_{\GL_{d_{i}}(\F_{q})}(\betam^{i})$.
Therefore, $p$ does not divide the order of $\frac{C}{C\cap\Umin^{1}}$,
so $\JmM$ is a Sylow $p$-subgroup of $CK^{l'}$. 
\end{proof}
We remark that the above lemma holds without the hypothesis that $\Omega_{r}$
consists of regular elements, but the proof is slightly easier in
the case of a regular orbit. Note also that when $\beta$ is as in
the above lemma, $\JmM$ is in fact normal in $CK^{l'}$, since $C=C\cap\Umax$
normalises $\JmM$ when $C$ is abelian. Thus $\JmM$ is the unique
Sylow $p$-subgroup of $CK^{l'}$. We will not need this fact.

\section{\label{sec:Chars}Characters and Heisenberg lifts}

As in the introduction, $F$ denotes the fraction field of $\mfo$.
Fix an additive character $\psi:F\rightarrow\mathbb{C}^{\times}$
which is trivial on $\mfo$ but not on $\mfp^{-1}$ (i.e., $\psi$
has conductor $\mfo$). For each $r\geq1$ we can view $\psi$ as
a character of the group $F/\mfp^{r}$ whose kernel contains $\mfo_{r}$.
We will use $\psi$ and the trace form $(x,y)\mapsto\tr(xy)$ on $\mfg_{r}$
to set up a duality between the groups $\Irr(K^{i})$ and $\mfg_{r-i}$,
for $i\geq r/2$. For $\beta\in\M_{N}(\mathfrak{o}_{r})$, define
a homomorphism $\psi_{\beta}:K^{i}\rightarrow\mathbb{C}^{\times}$
by
\begin{equation}
\psi_{\beta}(1+x)=\psi(\varpi^{-r}\tr(\hat{\beta}\hat{x})),\label{eq:character-psi-b}
\end{equation}
where $x\in\mfp^{i}\mfg_{r}$, and $\hat{\beta},\hat{x}\in\M_{N}(\mfo)$
denote arbitrary lifts of $\beta$ and $x$, respectively. The value
$\psi(\varpi^{-r}\tr(\hat{\beta}\hat{x}))$ is independent of the
choice of lifts, since $\psi$ is trivial on $\mfo$. For this reason,
we will abuse notation slightly from now on and write $\psi(\varpi^{-r}\tr(\beta x))$
instead of $\psi(\varpi^{-r}\tr(\hat{\beta}\hat{x}))$. The map $\beta\mapsto\psi_{\beta}$
is a homomorphism whose kernel is $\mfp^{r-i}\mfg_{r}$, thanks to
the non-degeneracy of the trace form. Hence it induces an isomorphism
\[
\mfg_{r}/\mfp^{r-i}\mfg_{r}\longiso\Irr(K^{i}),
\]
where we will usually identify $\mfg_{r}/\mfp^{r-i}\mfg_{r}$ with
$\mfg_{r-i}$. For $g\in G_{r}$ we have 
\[
\psi_{g\beta g^{-1}}(1+x)=\psi(\varpi^{-r}\tr(g\beta g^{-1}x))=\psi(\varpi^{-r}\tr(\beta g^{-1}xg))=\psi_{\beta}(1+g^{-1}xg).
\]
Let $\mathfrak{A},\mathfrak{P}$, and $U^{m},m\geq0$ be the objects
associated to an arbitrary flag of length $e$, as in Section~\ref{sec:Parahoric}.
Let $n$ and $m$ be two integers such that $e(r-1)+1\geq n>m\geq n/2>0$.
Then $U^{m}/U^{n}$ is abelian, and we have an isomorphism 
\[
\mathfrak{P}^{m}/\mathfrak{P}^{n}\longiso U^{m}/U^{n},\qquad x+\mathfrak{P}^{n}\longmapsto(1+x)U^{n}.
\]
Each $a\in\mfg_{r}$ defines a character $\mfg_{r}\rightarrow\mathbb{C}^{\times}$
via $x\mapsto\psi(\varpi^{-r}\tr(ax))$, and this defines an isomorphism
$\mfg_{r}\rightarrow\Irr(\mfg_{r})$. For any subgroup $S$ of $\mfg_{r}$,
define 
\[
S^{\perp}=\{x\in\mfg_{r}\mid\psi(\varpi^{-r}\tr(xS))=1\}.
\]
Using the isomorphism $\mfg_{r}\rightarrow\Irr(\mfg_{r})$, we can
identify $S^{\perp}$ with the group of characters of $\mfg_{r}$
which are trivial on $S$. 

We generalise the definition of $\psi_{\beta}$ to allow $\beta$
to lie in an appropriate power of $\mfP$. For any $\beta\in\mathfrak{P}^{e(r-1)+1-n}$
define a character $\psi_{\beta}:U^{m}\rightarrow\mathbb{C}^{\times}$
by 
\[
\psi_{\beta}(1+x)=\psi(\varpi^{-r}\tr(\beta x)).
\]

\begin{lem}
\label{lem:characters}Let $e(r-1)+1\geq n>m\geq n/2>0$. Then
\begin{enumerate}
\item \label{enu:ortho}For any integer $i$ such that $0\leq i\leq e(r-1)+1$,
we have 
\[
(\mathfrak{P}^{i})^{\perp}=\mathfrak{P}^{e(r-1)+1-i}.
\]
\item \label{enu:chariso}The map $\beta\mapsto\psi_{\beta}$ induces an
isomorphism 
\[
\mathfrak{P}^{e(r-1)+1-n}/\mathfrak{P}^{e(r-1)+1-m}\longiso\Irr(U^{m}/U^{n}).
\]
\end{enumerate}
\end{lem}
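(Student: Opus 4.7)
The plan is to deduce both parts from Lemma~\ref{lem:Trace-lemma}, which is the analytic core of the statement: it converts orthogonality with respect to the $\psi$-twisted trace pairing into a membership condition for the filtration $\{\mfP^i\}$.

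For part~\emph{(i)}, I would unwind the definition of $S^{\perp}$. An element $x\in\mfg_{r}$ lies in $(\mfP^{i})^{\perp}$ iff $\psi(\varpi^{-r}\tr(xy))=1$ for every $y\in\mfP^{i}$. Since $\psi$ has conductor $\mfo$ and the pairing $\varpi^{-r}\tr$ induces an isomorphism $\mfo_{r}\iso\Irr(\mfo_{r})$, this is equivalent to $\tr(\mfP^{i}x)=\{0\}$ in $\mfo_{r}$, which by Lemma~\ref{lem:Trace-lemma} is exactly $x\in\mfP^{e(r-1)+1-i}$.

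For part~\emph{(ii)}, first I would verify that for $\beta\in\mfP^{e(r-1)+1-n}$ the formula $\psi_{\beta}(1+x)=\psi(\varpi^{-r}\tr(\beta x))$ does define a character of $U^{m}/U^{n}$. From $(1+x)(1+y)=1+x+y+xy$ one gets
\[
\psi_{\beta}((1+x)(1+y))=\psi_{\beta}(1+x)\psi_{\beta}(1+y)\cdot\psi\!\left(\varpi^{-r}\tr(\beta xy)\right),
\]
and multiplicativity reduces to the vanishing of the last factor. Since $xy\in\mfP^{2m}$ the element $\beta xy$ lies in $\mfP^{e(r-1)+1-n+2m}\subseteq\mfP^{e(r-1)+1}=\mfp^{r-1}I$ precisely because $m\ge n/2$, and the trace of a strictly block-upper-triangular matrix vanishes. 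The same inclusion with $2m$ replaced by $n+m$ (taking $x\in\mfP^{n}$) shows $\psi_{\beta}|_{U^{n}}\equiv1$, so $\psi_{\beta}$ descends to $U^{m}/U^{n}$. The hypothesis $m\ge n/2$ is the key nontrivial input and the only real technical point in the argument.

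The map $\beta\mapsto\psi_{\beta}$ is manifestly a homomorphism in $\beta$. Its kernel consists of those $\beta\in\mfP^{e(r-1)+1-n}$ with $\tr(\beta\mfP^{m})=\{0\}$, which by Lemma~\ref{lem:Trace-lemma} equals $\mfP^{e(r-1)+1-n}\cap\mfP^{e(r-1)+1-m}=\mfP^{e(r-1)+1-m}$ (using $m<n$). This yields an injection $\mfP^{e(r-1)+1-n}/\mfP^{e(r-1)+1-m}\hookrightarrow\Irr(U^{m}/U^{n})$, and to finish I would match cardinalities. Part~\emph{(i)} gives $|\mfP^{e(r-1)+1-i}|=|\mfg_{r}|/|\mfP^{i}|$ by double duality for the non-degenerate pairing $\mfg_{r}\iso\Irr(\mfg_{r})$, so
\[
\left|\mfP^{e(r-1)+1-n}/\mfP^{e(r-1)+1-m}\right|=|\mfP^{m}|/|\mfP^{n}|=|\mfP^{m}/\mfP^{n}|=|U^{m}/U^{n}|=|\Irr(U^{m}/U^{n})|,
\]
the last equality because $U^{m}/U^{n}$ is abelian; injectivity then forces the map to be an isomorphism.
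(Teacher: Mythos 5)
Your proof is correct and follows the same route as the paper: part \emph{(i)} is exactly the paper's reduction of the orthogonality condition to $\tr(x\mathfrak{P}^{i})=\{0\}$ via the conductor of $\psi$, followed by Lemma~\ref{lem:Trace-lemma}. For part \emph{(ii)} the paper simply states that it follows from \emph{(i)} and the isomorphism $\mathfrak{P}^{m}/\mathfrak{P}^{n}\cong U^{m}/U^{n}$; your kernel computation and cardinality count are precisely the details being suppressed there, so the two arguments coincide in substance.
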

\begin{proof}
Let $\rho_{r}:\mfo\rightarrow\mfo_{r}$ be the canonical map. For
any $x\in\mfg_{r}$ the set $\varpi^{-r}\rho_{r}^{-1}(\tr(x\mathfrak{P}^{i}))$
is a fractional ideal of $\mfo$, so by our choice of $\psi$ we have
\[
\psi(\varpi^{-r}\tr(x\mathfrak{P}^{i})):=\psi(\varpi^{-r}\rho_{r}^{-1}(\tr(x\mathfrak{P}^{i}))=1
\]
 if and only if $\varpi^{-r}\rho_{r}^{-1}(\tr(x\mathfrak{P}^{i})\subseteq\mfo$.
Thus $x\in(\mathfrak{P}^{i})^{\perp}$ if and only if $\tr(x\mathfrak{P}^{i})=0$
in $\mfo_{r}$, so \ref{enu:ortho} follows from Lemma~\ref{lem:Trace-lemma}.
Moreover, \ref{enu:ortho} together with the isomorphism $\mathfrak{P}^{m}/\mathfrak{P}^{n}\cong U^{m}/U^{n}$
implies \ref{enu:chariso}. 
\end{proof}
Let $G$ be a finite group and $N$ a normal subgroup, such that $G/N$
is an elementary abelian $p$-group. Then the group $G/N$ has a structure
of $\F_{p}$-vector space. Let $\chi$ be a one-dimensional representation
of $N$ which is stabilised by $G$. Define an alternating bilinear
form 
\[
h_{\chi}:G/N\times G/N\longrightarrow\mathbb{C}^{\times},\qquad h_{\chi}(xN,yN)=\chi([x,y])=\chi(xyx^{-1}y^{-1}).
\]
By bilinearity we simply mean that $h_{\chi}(xN,yzN)=h_{\chi}(xN,yN)h_{\chi}(xN,zN)$
for all $x,y,z\in G$, and similarly for linearity in the first variable.
This follows from the commutator relation $[x,yz]=[x,y]^{y}[x,z]$
and its analogue for the first variable. Note that linearity with
respect to scalar multiplication follows from this since for $\bar{n}\in\F_{p}$
we have $\bar{n}(xN)=x^{n}N$, for any lift $n\in\Z$ of $\bar{n}$.).
An easy computation shows that $h_{\chi}$ is well-defined, thanks
to the stability of $\chi$ under $G$. Define the subspace
\[
\overline{R}_{\chi}=\{xN\in G/N\mid h_{\chi}(xN,yN)=1\:\text{for all }y\in G\}.
\]
This is the \emph{radical} of the form $h_{\chi}$, and we say that
$h_{\chi}$ is \emph{non-degenerate} if $\overline{R}_{\chi}=0$.
We will make use of the following result (cf.~\cite[8.3.3]{Bushnell_Frohlich}): 
\begin{lem}
\label{lem:Bushnell-Frohlich}Assume that the form $h_{\chi}$ is
non-degenerate. Then there exists a unique $\eta_{\chi}\in\Irr(G\mid\chi)$,
and $\dim\eta_{\chi}=[G:N]^{1/2}$.
\end{lem}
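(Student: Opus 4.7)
The plan is to exhibit an irreducible $\eta_\chi \in \Irr(G \mid \chi)$ of dimension $[G:N]^{1/2}$ by inducing a suitable character from a Lagrangian subgroup, and then to deduce uniqueness from the sum-of-squares formula for $\Ind_N^G \chi$.

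To set up symplectic linear algebra on $G/N$, observe that $G/N$ is an elementary abelian $p$-group, so $(yN)^p = N$ for every $y \in G$, and bilinearity of $h_\chi$ yields $h_\chi(xN, yN)^p = h_\chi(xN, (yN)^p) = 1$. Hence $h_\chi$ takes values in $\mu_p$, and via any isomorphism $\mu_p \cong \F_p$ it becomes an alternating $\F_p$-bilinear form on the $\F_p$-vector space $G/N$. Non-degeneracy then forces $\dim_{\F_p}(G/N) = 2k$ for some $k \geq 0$, so $[G:N] = p^{2k}$. Pick a Lagrangian subspace $A/N \subseteq G/N$ of dimension $k$ and let $A \subseteq G$ be its preimage. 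Since $G/N$ is abelian, $A$ is automatically normal in $G$, and the isotropy condition gives $\chi([a, a']) = 1$ for all $a, a' \in A$; equivalently $[A, A] \subseteq \ker \chi$ (using that $[A,A] \subseteq N$, which follows from $G/N$ being abelian).

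Next, I would extend $\chi$ to a character $\tilde\chi$ of $A$. Because $[A, A] \subseteq \ker \chi$, the quotient $A/\ker\chi$ is abelian (note $\ker\chi \triangleleft G$ since $\chi$ is $G$-stable), and $\chi$ factors through the subgroup $N/\ker\chi$; any character of a subgroup of a finite abelian group extends, so $\tilde\chi$ exists. Set $\eta_\chi := \Ind_A^G \tilde\chi$; this has dimension $[G:A] = p^k$. To check irreducibility via the Mackey criterion (which applies since $A$ is normal), suppose ${}^g\tilde\chi = \tilde\chi$ for some $g \in G$. Then $\tilde\chi([g, a]) = 1$ for all $a \in A$, and since $[g,a] \in N$ this reads $h_\chi(gN, aN) = 1$ for all $a \in A$, i.e.\ $gN$ lies in the $h_\chi$-orthogonal of $A/N$. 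By non-degeneracy and maximality of $A/N$, this orthogonal equals $A/N$, so $g \in A$. Hence $\eta_\chi$ is irreducible, and visibly lies in $\Irr(G \mid \chi)$.

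Finally, for uniqueness: since $\chi$ is $G$-stable, each $\pi \in \Irr(G \mid \chi)$ satisfies $\pi|_N = (\dim \pi)\chi$, so Frobenius reciprocity yields the decomposition
\[
\Ind_N^G \chi \;=\; \bigoplus_{\pi \in \Irr(G \mid \chi)} (\dim \pi)\,\pi, \qquad [G:N] \;=\; \sum_{\pi \in \Irr(G\mid\chi)} (\dim \pi)^2.
\]
Since $\eta_\chi$ alone already contributes $(\dim \eta_\chi)^2 = [G:N]$ to this sum, no other representation can occur, giving both uniqueness and the stated dimension formula. The main non-formal step is the Mackey irreducibility check, which rests on the self-orthogonality of Lagrangians under the non-degenerate $h_\chi$; the other ingredients (extension of $\chi$ to $\tilde\chi$, symplectic linear algebra over $\F_p$, and the Frobenius reciprocity computation) are standard once one observes that $h_\chi$ is $\F_p$-valued.
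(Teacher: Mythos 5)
Your proof is correct. The paper does not prove this lemma at all --- it is quoted from Bushnell--Fr\"ohlich (8.3.3) --- and your argument (checking that $h_\chi$ is $\mu_p$-valued so that $G/N$ is a symplectic $\F_p$-space, inducing an extension of $\chi$ from the preimage of a Lagrangian, verifying irreducibility via the Mackey/stabiliser criterion using self-orthogonality of the Lagrangian, and then killing all other constituents with the identity $[G:N]=\sum_{\pi\in\Irr(G\mid\chi)}(\dim\pi)^2$) is the standard self-contained proof of this Heisenberg-type statement. All the individual steps check out, including the extension of $\chi$ to $A$ via the abelianness of $A/\Ker\chi$ and the use of Clifford's theorem to get $\pi|_N=(\dim\pi)\chi$ for $G$-stable $\chi$.
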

Note that if $\chi$ and $\chi'$ are two representations of $N$
such that $\eta_{\chi}=\eta_{\chi'}$, then by Clifford's theorem,
the restriction $\eta_{\chi}|_{N}=\eta_{\chi'}|_{N}$ is a multiple
of $\chi$ and of $\chi'$, so we must have $\chi=\chi'$.

We will encounter situations where the form $h_{\chi}$ is not non-degenerate.
In these cases we will apply the following generalisation, which is
a corollary of the above lemma:
\begin{cor}
\label{cor:Bushnell-Frohlich}Let $G$, $N$ and $\overline{R}_{\chi}$
be as above, and let $R_{\chi}$ be the inverse image of $\overline{R}_{\chi}$
under the map $G\rightarrow G/N$. Then $\chi$ has an extension to
$R_{\chi}$, and for any extension $\tilde{\chi}$ of $\chi$, there
exists a unique $\eta_{\tilde{\chi}}\in\Irr(G\mid\tilde{\chi})$.
Moreover, 
\[
\dim\eta_{\tilde{\chi}}=[G:R_{\chi}]^{1/2}.
\]
\end{cor}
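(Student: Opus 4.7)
The plan is to reduce to Lemma~\ref{lem:Bushnell-Frohlich} applied to the triple $(G, R_\chi, \tilde\chi)$, so the work splits into constructing the extension $\tilde\chi$, verifying that $R_\chi$ and $\tilde\chi$ satisfy the hypotheses of the lemma, and checking non-degeneracy of the induced form.

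For the extension, I would first note that $R_\chi/N = \overline{R}_\chi$ is a subgroup of the abelian group $G/N$, so $[R_\chi, R_\chi] \subseteq N$, and for any $r, r' \in R_\chi$ we have $\chi([r,r']) = h_\chi(rN, r'N) = 1$ because $rN \in \overline{R}_\chi$. Hence $\chi$ is trivial on $[R_\chi, R_\chi]$, so it defines a character of the subgroup $N/[R_\chi,R_\chi]$ of the finite abelian group $R_\chi/[R_\chi,R_\chi]$; any such character extends by Pontryagin duality, and pulling back to $R_\chi$ yields an extension $\tilde\chi$ of $\chi$.

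Next I would show that $\tilde\chi$ is stabilised by $G$. For $g \in G$ and $r \in R_\chi$, the commutator $[g,r]$ lies in $[G,G] \subseteq N$, and $\chi([g,r]) = h_\chi(gN, rN) = 1$ because $rN \in \overline{R}_\chi$. Writing $grg^{-1} = [g,r]\cdot r$ and using that $\tilde\chi$ is a homomorphism extending $\chi$, I get $\tilde\chi(grg^{-1}) = \chi([g,r])\,\tilde\chi(r) = \tilde\chi(r)$. Note also that $R_\chi$ is normal in $G$ (every subgroup of $G$ containing $N$ is, since $G/N$ is abelian) and $G/R_\chi$ is elementary abelian of exponent $p$ as a quotient of $G/N$.

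Finally, for any $g, g' \in G$ the form satisfies $h_{\tilde\chi}(gR_\chi, g'R_\chi) = \tilde\chi([g,g']) = \chi([g,g']) = h_\chi(gN, g'N)$ since $[g,g'] \in N$, so $h_{\tilde\chi}$ is precisely the form induced by $h_\chi$ on $(G/N)/\overline{R}_\chi$; quotienting out the radical yields a non-degenerate form by construction. Applying Lemma~\ref{lem:Bushnell-Frohlich} to $(G, R_\chi, \tilde\chi)$ then produces the unique $\eta_{\tilde\chi} \in \Irr(G \mid \tilde\chi)$ with $\dim \eta_{\tilde\chi} = [G : R_\chi]^{1/2}$. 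The one point that requires genuine care is the $G$-stability of $\tilde\chi$, and it is precisely the characterisation of $\overline{R}_\chi$ as the radical of $h_\chi$ that makes this work.
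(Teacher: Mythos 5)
Your proof is correct and follows essentially the same route as the paper: extend $\chi$ to $R_\chi$ using that $\chi$ kills $[R_\chi,R_\chi]$, check $G$-stability of $\tilde\chi$ and normality of $R_\chi$, identify $h_{\tilde\chi}$ with the non-degenerate form induced on $(G/N)/\overline{R}_\chi$, and apply Lemma~\ref{lem:Bushnell-Frohlich}. Your observation that normality of $R_\chi$ follows immediately from $R_\chi\supseteq N$ and $G/N$ abelian is a slight simplification of the paper's commutator computation, but the argument is the same in substance.
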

\begin{proof}
By definition, $\chi([R_{\chi},R_{\chi}])=1$, so $R_{\chi}/\Ker\chi$
is abelian and thus $\chi$ extends to $R_{\chi}$. Let $\tilde{\chi}$
be an extension and $x\in G$. Then, for any $r\in R_{\chi}$, we
have $[x,r]\in N$, so
\[
\tilde{\chi}([x,r])=\chi([x,r])=1.
\]
Hence $\tilde{\chi}(xrx^{-1})=\tilde{\chi}(r)$, that is, $x$ stabilises
$\tilde{\chi}$. Moreover, $R_{\chi}$ is normal in $G$ since for
any $x,y\in G$, $r\in R_{\chi}$, we have 
\begin{align*}
\chi([xrx^{-1},y]) & =\tilde{\chi}([xrx^{-1},y])=\tilde{\chi}(xrx^{-1}yxr^{-1}x^{-1}y^{-1})\\
 & =\tilde{\chi}(x^{-1}y^{-1}xrx^{-1}yxr^{-1})=\tilde{\chi}(x^{-1}y^{-1}xrx^{-1}yx)\tilde{\chi}(r^{-1})\\
 & =\tilde{\chi}(r)\tilde{\chi}(r^{-1})=1.
\end{align*}
We thus have a well-defined form $h_{\tilde{\chi}}$ on $G/R_{\chi}$,
which is non-degenerate. The remaining statements now follow immediately
from Lemma~\ref{lem:Bushnell-Frohlich}.
\end{proof}
In the situation of the above corollary, we call $\eta_{\tilde{\chi}}$
a \emph{Heisenberg lift} of $\chi$.

We will apply the above results to the group $J_{*}^{1}$ and its
normal subgroup $H_{*}^{1}$, for $*\in\{\mathrm{m},\mathrm{M}\}$.
We have an isomorphism 
\[
J_{*}^{1}/H_{*}^{1}\cong\frac{U_{*}^{e_{*}l'}}{(C\cap U_{*}^{e_{*}l'})U_{*}^{e_{*}l'+1}},
\]
and this is a quotient of $U_{*}^{e_{*}l'}/U_{*}^{e_{*}l'+1}\cong\overline{\mfA}_{*}/\overline{\mfP}_{*}$
(where $\overline{\mfA}_{*}$ is the image of $\mfA_{*}$ in $\mfg_{1}$,
as in Section~\ref{sec:Parahoric}), with the latter isomorphism
being induced by the map $1+\pi^{l'}x\mapsto\bar{x}+\mfP_{*}$. Since
$\overline{\mfA}_{*}/\overline{\mfP}_{*}$ is a product of additive
groups of matrix rings over $\F_{q}$, it is an elementary abelian
$p$-group (where as before $p=\chara\F_{q}$), and in fact has a
structure of $\F_{q}$-vector space. Thus $J_{*}^{1}/H_{*}^{1}$,
being a quotient of an elementary abelian group, is itself elementary
abelian.

\section{\label{sec:Construction}Construction of regular representations}

For any $x\in\mfg_{r}$, let $x_{i}$ denote the image of $x$ in
$\mfg_{i}$, for $r\geq i\geq1$. We also write $\bar{x}$ for $x_{1}$.
Recall from the paragraph preceding Lemma~\ref{lem:Centr-in-Am}
that a regular element in $\mfg_{i}$ can be defined by the property
of having centraliser in $\mfg_{i}$ of $\mfo_{i}$-rank $N$. A result
of Hill \cite[Corollary~3.7]{Hill_regular} implies that if $\beta_{i}\in\mfg_{i}$
is regular, then $C_{G_{i}}(\beta_{i})=\mfo_{i}[\beta_{i}]^{\times}$.
It follows that if $\beta$ is regular, then $C=C_{G_{r}}(\beta)$
is abelian and the homomorphisms
\[
\rho_{r,i}:C\rightarrow C_{G_{i}}(\beta_{i})
\]
are surjective for every $r\geq i\geq1$. 

Suppose that $\pi$ is an irreducible representation of $G_{r}$.
By Clifford's theorem, the restriction of $\pi$ to the abelian group
$K^{l}$ defines an orbit of characters $\psi_{\beta}$, and hence
by the results of Section~\ref{sec:Chars}, an orbit of elements
in $\mfg_{r}/\mathfrak{p}^{r-l}\mfg_{r}\cong\mfg_{l'}$ under the
conjugacy action of $G_{r}$ (i.e., the adjoint action). We call $\pi$
\emph{regular} if this orbit in $\mfg_{l'}$ consists of regular elements.
\\
\\
Fix an orbit $\Omega_{l'}\subset\mfg_{l'}$ consisting of regular
elements. We will construct all the irreducible representations of
$G_{r}$ with orbit $\Omega_{l'}$. When $r$ is even, the construction
is well known and amounts to taking any $\beta+\mfp^{l'}\mfg_{r}\in\Omega_{l'}$,
extending $\psi_{\beta}$ to $CK^{l'}$ and inducing to $G_{r}$.
To show that $\psi_{\beta}$ extends to $CK^{l'}$ is straightforward
in this case; see for example \cite[Theorem~4.1]{Hill_regular}.

From now on, assume that $r\geq3$ is odd, so that $l'=l-1$. We highlight
the hypotheses that will remain in force throughout this section:
\begin{enumerate}
\item $r\geq3$ is odd, 
\item $\beta\in\mfg_{r}$ is regular,
\item $\beta\in\Amin$ and the image $\betam\in\Amin/\Pmin\cong\prod_{i=1}^{h}\M_{d_{i}}(\F_{q})^{m_{i}}$
is
\[
\betam=\underbrace{\betam^{1}\oplus\dots\oplus\betam^{1}}_{m_{1}\text{ times}}\oplus\dots\oplus\underbrace{\betam^{h}\oplus\dots\oplus\betam^{h}}_{m_{h}\text{ times}},
\]
where $\betam^{i}\in\M_{d_{i}}(\F_{q})$ have irreducible characteristic
polynomial. 
\end{enumerate}
Before presenting the details of the construction, we give an informal
overview. Schematically, the construction is illustrated by the following
diagrams (dotted lines are extensions, dashed are Heisenberg lifts,
and the solid one between $\etam$ and $\eta$ is an induction):
$$
\includegraphics{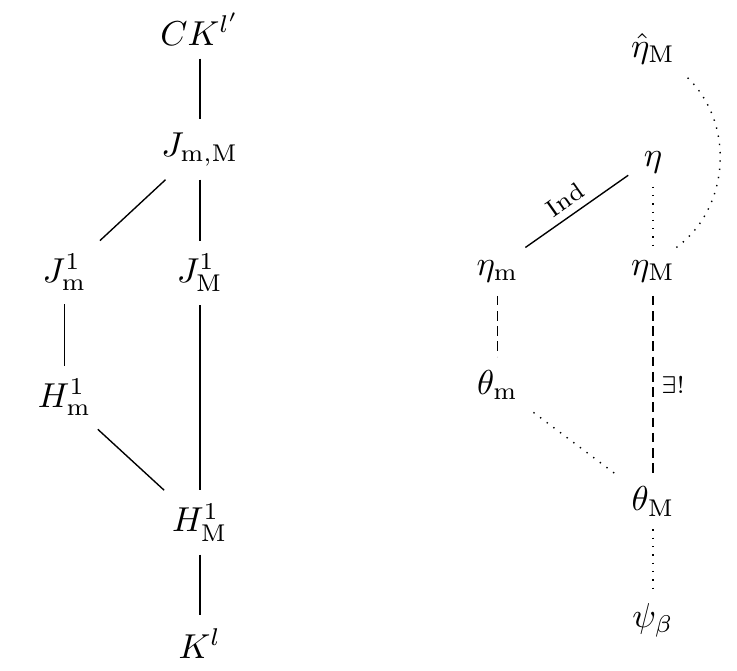}
$$
The diagram of representations on the right should be read from bottom
to top. We have seen in Lemma~\ref{lem:p-Sylow} that there exists
a $\beta\in\Omega_{r}$ such that $\JmM$ is a Sylow $p$-subgroup
of $CK^{l'}$, and fix one such $\beta$. We then show that the character
$\psi_{\beta}$ of $K^{l}$ has an extension $\thetaM$ to $\Hmax^{1}$
and that $\thetaM$ extends further to $\thetam$ on $\Hmin^{1}$.
Next, we use Corollary~\eqref{cor:Bushnell-Frohlich} to show that
there exists a unique representation $\etaM\in\Irr(\Jmax^{1}\mid\thetaM)$,
as well as a (non-unique) representation $\etam\in\Irr(\Jmin^{1}\mid\thetam)$.
Moreover, we compute the dimensions of $\etaM$ and $\etam$. Then
we show that $\eta:=\Ind_{\Jmin^{1}}^{\JmM}\etam$ has the same dimension
as $\etaM$, from which it follows that $\eta$ is an extension of
$\etaM$. We can then apply a general lemma to conclude that $\etaM$
has an extension $\hatetaM$ to $CK^{l'}$. Finally, we show that
by choosing all possible extensions $\thetaM$ of $\psi_{\beta}$
and all possible extensions $\hatetaM$ of $\etaM$, we have constructed
all the representations in $\Irr(CK^{l'}\mid\psi_{\beta})$, without
redundancy. Since $CK^{l'}=\Stab_{G_{r}}(\psi_{\beta})$, a standard
result from Clifford theory then yields all the irreducible representations
of $G_{r}$ with orbit $\Omega_{r}$ by induction from $CK^{l'}$. 

We now give the details and proofs of the construction. Some of the
steps can be carried out for the groups arising from the algebras
$\Amin$ and $\Amax$ simultaneously. For this purpose, we will let
$\mfA$ denote either $\Amin$ or $\Amax$, and let $\mfP$ be the
radical in $\mfA$, with $e=e(\mfA)$. The associated subgroups will
be denoted by $U^{i}$, $H^{1}$, $J^{1}$.
\begin{lem}
\label{lem:psi-extn-theta}The character $\psi_{\beta}$ has an extension
$\thetaM$ to $\Hmax^{1}$. Moreover, $\thetaM$ has an extension
$\thetam$ to $\Hmin^{1}$.
\end{lem}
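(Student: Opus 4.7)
The plan is to build both $\thetaM$ and $\thetam$ by a single recipe: choose a suitable character $\chi$ of the abelian group $C\cap U^{1}$ and set
\[
\theta\bigl((1+c)(1+u)\bigr)\;=\;\chi(1+c)\,\psi_{\beta}(1+u),
\]
where $U=\Umax$ (resp.~$\Umin$) for $\theta=\thetaM$ (resp.~$\thetam$) and $1+u$ runs through the relevant congruence subgroup. Two structural facts, both consequences of regularity of $\beta$, are essential. First, $C=C_{G_{r}}(\beta)$ is abelian and sits inside $\Amin^{\times}$ by Lemma~\ref{lem:Centr-in-Am}, so $C$ normalises each of $\Amin$, $\Pmin$, $K^{1}$, $K^{l}$, $\Umin^{1}$ and $U_{\mathrm{m}}^{e_{\mathrm{m}} l'+1}$. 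Second, a direct trace computation shows that $\psi_{\beta}$ is invariant under $C$-conjugation wherever it is defined, because $\tr(\beta\cdot gxg^{-1})=\tr(g^{-1}\beta g\cdot x)=\tr(\beta x)$ whenever $g\in C$.

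First I would extend $\psi_{\beta}$ from $K^{l}$ to the whole abelian group $U_{\mathrm{m}}^{e_{\mathrm{m}} l'+1}=1+\mfp^{l'}\Pmin$ via the same formula $1+x\mapsto\psi(\varpi^{-r}\tr(\beta x))$; this is a well-defined character by Lemma~\ref{lem:characters} applied to the parahoric algebra $\Amin$. Because $r\geq3$ gives $l'\geq1$ and $\mfp\mfg_{r}\subseteq\Pmin$, we obtain an ascending chain of finite abelian groups
\[
C\cap U_{\mathrm{m}}^{e_{\mathrm{m}} l'+1}\;\subseteq\;C\cap K^{1}\;\subseteq\;C\cap\Umin^{1}.
\]
Since characters of subgroups of a finite abelian group always extend, I can successively extend the character $\psi_{\beta}|_{C\cap U_{\mathrm{m}}^{e_{\mathrm{m}} l'+1}}$ to a character $\chi$ of $C\cap K^{1}$ and then further to a character $\chi'$ of $C\cap\Umin^{1}$. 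Using $\chi$ I would define $\thetaM$ on $\Hmax^{1}=(C\cap K^{1})K^{l}$ by the recipe above, and using $\chi'$ I would define $\thetam$ on $\Hmin^{1}=(C\cap\Umin^{1})U_{\mathrm{m}}^{e_{\mathrm{m}} l'+1}$ by the same recipe.

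The remaining verifications are routine. Well-definedness of $\theta$ reduces to showing that if $(1+c)(1+u)=(1+c')(1+u')$ then $(1+c')^{-1}(1+c)=(1+u')(1+u)^{-1}$ lies in $(C\cap U^{1})\cap U^{el'+1}=C\cap U^{el'+1}$; on this intersection $\chi$ (resp.~$\chi'$) agrees with $\psi_{\beta}$ by construction. The homomorphism property follows after commuting $(1+c)$ past $(1+u)$: the conjugate $(1+c)^{-1}(1+u)(1+c)$ still lies in $U^{el'+1}$ because $C$ normalises it, and the $C$-invariance of $\psi_{\beta}$ noted above makes the two sides agree. That $\thetam$ restricts to $\thetaM$ on $\Hmax^{1}\subseteq\Hmin^{1}$ follows because $\chi'$ restricts to $\chi$ on $C\cap K^{1}$ and $\psi_{\beta}$ on $U_{\mathrm{m}}^{e_{\mathrm{m}} l'+1}$ restricts to $\psi_{\beta}$ on $K^{l}$.

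The main subtlety — indeed the only real content — is that $\chi$ must be chosen to agree with $\psi_{\beta}$ not merely on $C\cap K^{l}$, which is all that $\thetaM$ on its own requires, but on the possibly strictly larger subgroup $C\cap U_{\mathrm{m}}^{e_{\mathrm{m}} l'+1}$, so that the subsequent extension to $\chi'$ remains consistent with $\psi_{\beta}$. This forces one to construct the two extensions in tandem rather than handling $\thetaM$ in isolation; once the ambient chain of abelian subgroups displayed above is identified, the existence of $\chi$ and $\chi'$ is immediate from the extension property for characters of finite abelian groups.
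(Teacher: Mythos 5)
Your proof is correct and follows essentially the same route as the paper's: extend $\psi_{\beta}$ to $U^{el'+1}$ by the trace formula via Lemma~\ref{lem:characters}, observe that the abelian group $C\cap U^{1}$ stabilises it (your direct identity $\tr(\beta\,gxg^{-1})=\tr(\beta x)$ for $g\in C$ is equivalent to the paper's commutator expansion), and conclude that it extends to $H^{1}=(C\cap U^{1})U^{el'+1}$. You are in fact somewhat more explicit than the paper about arranging the compatibility $\thetam|_{\Hmax^{1}}=\thetaM$, by insisting that $\chi$ agree with $\psi_{\beta}$ on all of $C\cap U_{\mathrm{m}}^{e_{\mathrm{m}}l'+1}$ rather than merely on $C\cap K^{l}$; this is a legitimate and welcome refinement of the same argument.
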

\begin{proof}
By Lemma~\ref{lem:characters}\,\ref{enu:chariso}, if we take
\[
m=el'+1,\qquad n=2m-1=e(r-1)+1,
\]
then the coset $\beta+\mathfrak{\mfP}^{e_{}l'}$ defines a character
on $U^{el'+1}$, trivial on $U^{e(r-1)+1}$ by the same formula as
the one defining $\psi_{\beta}$. Since $\mathfrak{\mfP}_{}^{e_{}l'}=\mfp^{l'}\mfA_{}$,
we have a map 
\[
\mfA_{}/\mathfrak{\mfP}_{}^{e_{}l'}\longrightarrow\mfg_{r}/\mfp^{l'}\mfg_{r},
\]
which sends $\beta+\mathfrak{\mfP}_{}^{e_{}l'}$ to $\beta+\mfp^{l'}\mfg_{r}$
(note that this is neither surjective nor injective). Thus the different
choices of lift of the latter coset give the different choices of
extension of $\psi_{\beta}$ to $U_{}^{e_{}l'+1}$. Our element $\beta\in\mfA$
therefore gives rise to an extension (which we still denote by $\psi_{\beta})$
of $\psi_{\beta}$ to $U^{el'+1}$, defined by 
\[
\psi_{\beta}(1+x)=\psi(\varpi^{-r}\tr(\beta x)),\quad\text{for }x\in\mfP_{}^{e_{\mathrm{}}l'+1}.
\]
We now show the existence of the extensions $\thetaM$ and $\thetam$.
If $c\in C\cap U_{}^{1}$ and $x\in\mathfrak{\mfP}_{}^{e_{}l'+1}$,
then 
\begin{align*}
[c,1+x] & \in c(1+x)c^{-1}(1-x+\mfP_{}^{e_{}(r-1)+2})\\
 & =1+cxc^{-1}-x+\mfP_{}^{e_{}(r-1)+2}.
\end{align*}
Since we have 
\begin{equation}
U^{e_{}(r-1)+1}\subseteq\Ker\psi_{\beta},\label{eq:Umin-in-ker}
\end{equation}
we obtain 
\[
\psi_{\beta}([c,1+x])=\psi(\varpi^{-r}\tr(\beta(cxc^{-1}-x)))=\psi(\varpi^{-r}\tr(c\beta xc^{-1}-\beta x))=1,
\]
where we have used that $c$ commutes with $\beta$.

Thus $C\cap U_{}^{1}$ stabilises the character $\psi_{\beta}$ on
$U^{el'+1}$, and since $C\cap U^{1}$ is abelian, this implies that
$\psi_{\beta}$ extends to $H^{1}=(C\cap U^{1})U^{e_{}l'+1}$. 
\end{proof}
\begin{rem}
\label{rem:theta_0psi_beta}The extension $\thetaM$ can be written
as a character $\theta_{0}\psi_{\beta}$, where $\theta_{0}\in\Irr(C\cap K^{1})$
is a character which agrees with $\psi_{\beta}$ on $C\cap K^{l}$
and
\[
\theta_{0}\psi_{\beta}(zk):=\theta_{0}(z)\psi_{\beta}(k),
\]
for $z\in C\cap K^{1}$ and $k\in K^{1}$. We will use this later
in the proof of Lemma~\ref{lem:Stab-etaM}. One can write $\thetam$
similarly, but we will not need that.
\end{rem}
We fix arbitrary extensions $\thetaM$ and $\thetam$ as in the above
lemma. For $*\in\{\mathrm{m},\mathrm{M}\}$, we will now construct
the irreducible representations $\eta_{*}$ of $J_{*}^{1}$ containing
$\theta_{*}$. In particular, we will show that there exists a unique
representation $\etaM$ of $\Jmax^{1}$ containing $\thetaM$. We
will treat both cases simultaneously, denoting either $\thetam$ or
$\thetaM$ by $\theta$. We need to verify the hypotheses of Corollary~\ref{cor:Bushnell-Frohlich}.
To this end, first note that $\theta_{}$ is stabilised by $J_{}^{1}$.
Indeed, it is enough to show that $U_{}^{e_{}l'}$ stabilises $\theta_{}$.
For $x\in\mfP_{}^{e_{}l'}$, $c\in(C\cap U_{}^{1})$ and $y\in\mfP_{}^{e_{}l'+1}$,
we have
\begin{align*}
[1+x,c(1+y)] & \in(1+x)c(1+y)(1-x+x^{2}+\mfP^{e(r-1)+1})(1-y+\mfP^{e_{}(r-1)+1})c^{-1}\\
 & \subseteq(c+xc-cx+cy)(1-y)c^{-1}+\mfP^{e(r-1)+1}\\
 & \subseteq1+x-cxc^{-1}+\mfP^{e_{}(r-1)+1}.
\end{align*}
Hence, since $\psi_{\beta}$ is trivial on $U^{e(r-1)+1}$ (see \eqref{eq:Umin-in-ker})
and $c$ commutes with $\beta$, we have 
\[
\theta([1+x,c(1+y)])=\psi_{\beta}([1+x,c(1+y)])=\psi(\varpi^{-r}\tr(c\beta xc^{-1}-\beta x))=1,
\]
that is, $\theta$ is stabilised by the element $c(1+y)$, hence by
all of $J^{1}$. We saw at the end of Section~\ref{sec:Chars} that
$J^{1}/H^{1}$ is an elementary abelian $p$-group. Define the alternating
bilinear form 
\[
h_{\beta}:J^{1}/H^{1}\times J^{1}/H^{1}\longrightarrow\mathbb{C}^{\times},\qquad h_{\beta}(xH^{1},yH^{1})=\theta([x,y])=\psi_{\beta}([x,y]).
\]
Let $\overline{R}_{\beta}$ be the radical of the form $h_{\beta}$,
and let $R_{\beta}$ denote the preimage of $\overline{R}_{\beta}$
under the map $J^{1}\rightarrow J^{1}/H^{1}$. If we need to specify
which parabolic subalgebra $\mfA_{*}$ we are working with, we will
write $\overline{R}_{\beta,*}$, $R_{\beta,*}$, for $*\in\{\mathrm{m},\mathrm{M}\}$.
For our purposes, we need to determine the dimension of the unique
representation $\eta\in\Irr(J^{1}\mid\psi_{\beta})$, which by Corollary~\ref{cor:Bushnell-Frohlich}
equals $[J^{1}:R_{\beta}]^{1/2}=[J^{1}/H^{1}:\overline{R}_{\beta}]^{1/2}$.

In order to determine the radical of the form $h_{\theta}$, we need
the following result:
\begin{lem}
\label{lem:theta-commutator-1}Let $x,y\in J^{1}$ and write $x=z_{1}(1+s)$
and $y=z_{2}(1+t)$, where $z_{1},z_{2}\in C\cap U^{1}$ and $s,t\in\mathfrak{P}^{el'}$.
Then
\[
\theta([x,y])=\psi_{\beta}(1+(st-ts)).
\]
\end{lem}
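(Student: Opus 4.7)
The plan is to exploit the $J^{1}$-stability of $\theta$ (proved in the passage immediately preceding the lemma) to eliminate $z_{1}$ and $z_{2}$, and then compute $[u,v]$ directly in the algebra $\mfA$ with $u = 1+s$ and $v = 1+t$.

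Since $z_{1}, z_{2} \in C \cap U^{1} \subseteq H^{1}$, their images in $J^{1}/H^{1}$ are trivial. The form $h_{\theta} \colon J^{1}/H^{1} \times J^{1}/H^{1} \to \mathbb{C}^{\times}$ defined by $h_{\theta}(xH^{1}, yH^{1}) = \theta([x,y])$ is alternating bilinear (as discussed at the end of Section~\ref{sec:Chars}, precisely because $\theta$ is $J^{1}$-stable). Bilinearity then gives
\[
\theta([z_{1}u, z_{2}v]) = h_{\theta}(z_{1}uH^{1}, z_{2}vH^{1}) = h_{\theta}(uH^{1}, vH^{1}) = \theta([u,v]),
\]
reducing the claim to showing $\theta([u,v]) = \psi_{\beta}(1 + (st - ts))$.

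For the explicit computation, I would work in $\mfA$ itself: a direct expansion gives $uv - vu = st - ts$, hence $uv = vu + (st - ts)$, and
\[
[u,v] = uv(vu)^{-1} = 1 + (st - ts)(vu)^{-1}.
\]
Since $s, t \in \mfP^{el'}$, the element $st - ts$ lies in $\mfP^{2el'} = \mfP^{e(r-1)}$ and $(vu)^{-1} \in \mfA^{\times}$, so $[u,v] \in U^{e(r-1)} \subseteq U^{el'+1}$ (using $l' \geq 1$). By the construction of $\theta$ in Lemma~\ref{lem:psi-extn-theta}, $\theta$ agrees with $\psi_{\beta}$ on $U^{el'+1}$, so
\[
\theta([u,v]) = \psi_{\beta}([u,v]) = \psi(\varpi^{-r}\tr(\beta(st-ts)(vu)^{-1})).
\]

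To finish, I would write $(vu)^{-1} = 1 + w$ with $w \in \mfP^{el'}$. The resulting correction term $\tr(\beta(st-ts)w)$ has argument in $\mfA \cdot \mfP^{2el'} \cdot \mfP^{el'} \subseteq \mfP^{3el'} \subseteq \mfP^{e(r-1)+1}$, and by Lemma~\ref{lem:Trace-lemma} its trace vanishes in $\mfo_{r}$. Hence $\psi_{\beta}([u,v]) = \psi(\varpi^{-r}\tr(\beta(st-ts))) = \psi_{\beta}(1 + (st-ts))$, as required. The main technical point is the filtration bookkeeping: the crucial observation is that $st - ts$ already lies in $\mfP^{e(r-1)}$ rather than only in $\mfP^{el'+1}$, which is exactly what allows the correction from $(vu)^{-1}$ to be absorbed into the kernel of $\psi_{\beta}$.
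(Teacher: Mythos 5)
Your proof is correct, but it takes a genuinely different route from the paper's. The paper argues by brute force: it expands the full commutator $[z_{1}(1+s),z_{2}(1+t)]$ modulo $\mathfrak{P}^{e(r-1)+1}$ (using geometric-series expansions of the inverses), obtaining $1+(st-ts)$ plus several terms of the form $\pm gag^{-1}$ with $g$ a product of the $z_{i}$ and $a\in\{s,t\}$, and then observes that these extra terms die under $\psi_{\beta}$ because the $z_{i}$ commute with $\beta$, so $\tr(\beta gag^{-1})=\tr(\beta a)$. You instead dispose of $z_{1},z_{2}$ at the outset by invoking the pairing $h_{\theta}$ on $J^{1}/H^{1}$: since $C\cap U^{1}\subseteq H^{1}$, the cosets of $z_{1}u$ and $u$ coincide, and the pairing's well-definedness (which rests only on the $J^{1}$-stability of $\theta$, established immediately before the lemma, together with the general discussion in Section~\ref{sec:Chars}) gives $\theta([z_{1}u,z_{2}v])=\theta([u,v])$ --- note this is well-definedness rather than bilinearity per se. You then compute $[1+s,1+t]$ exactly as $1+(st-ts)(vu)^{-1}$ and absorb the correction from $(vu)^{-1}$ into $\Ker\psi_{\beta}$ via the filtration estimate $3el'\geq e(r-1)+1$ (valid since $el'\geq1$). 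Your route is cleaner and more conceptual; the paper's is self-contained and, as a byproduct, directly exhibits the independence of $h_{\beta}$ from $z_{1},z_{2}$ remarked upon after the lemma, though that independence is equally a consequence of well-definedness. Both arguments are sound.
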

\begin{proof}
Note that for any $s_{1},s_{2}\in\mathfrak{P}^{el'}$ we have $z_{1}s_{1}s_{2}\in s_{1}s_{2}+\mathfrak{P}^{e(r-1)+1}$
and 
\[
z_{1}s_{1}z_{2}s_{2}\in z_{1}(s_{1}+\mathfrak{P}^{el'+1})s_{2}\subseteq z_{1}s_{1}s_{2}+\mathfrak{P}^{e(r-1)+1}\subseteq s_{1}s_{2}+\mathfrak{P}^{e(r-1)+1}.
\]
Thus 
\begin{align*}
[x,y] & \in z_{1}(1+s)z_{2}(1+t)(1-s+s^{2}+\mathfrak{P}^{e(r-1)+1})z_{1}^{-1}(1-t+t^{2}+\mathfrak{P}^{e(r-1)+1})z_{2}^{-1}\\
 & \subseteq(z_{1}z_{2}+z_{1}sz_{2})(1+t)(1-s+s^{2})(z_{1}^{-1}z_{2}^{-1}-z_{1}^{-1}tz_{2}^{-1}+z_{1}^{-1}t^{2}z_{2}^{-1})+\mathfrak{P}^{e(r-1)+1}\\
 & \subseteq(1-z_{1}z_{2}sz_{1}^{-1}z_{2}^{-1}+z_{1}z_{2}tz_{1}^{-1}z_{2}^{-1}-ts+z_{1}sz_{1}^{-1}+st\\
 & \phantom{{{}\subseteq{}}}-z_{2}tz_{2}^{-1}+st-t^{2}-st+t^{2})+\mathfrak{P}^{e(r-1)+1}\\
 & \subseteq1-z_{1}z_{2}sz_{1}^{-1}z_{2}^{-1}+z_{1}z_{2}tz_{1}^{-1}z_{2}^{-1}+z_{1}sz_{1}^{-1}-z_{2}tz_{2}^{-1}+st-ts+\mathfrak{P}^{e(r-1)+1}.
\end{align*}
Using the facts that $z_{1}$ and $z_{2}$ commute with $\beta$ and
that $U^{e(r-1)+1}\subseteq\Ker\theta$, we get 
\begin{align*}
\theta([x,y]) & =\psi_{\beta}(1+(-z_{1}z_{2}sz_{1}^{-1}z_{2}^{-1}+z_{1}z_{2}tz_{1}^{-1}z_{2}^{-1}+z_{1}sz_{1}^{-1}-z_{2}tz_{2}^{-1}+st-ts))\\
 & =\psi_{\beta}(1+(st-ts)).
\end{align*}
\end{proof}
Note that the above lemma implies that the value of the form $h_{\beta}$
on the elements $x=z_{1}(1+s)$ and $y=z_{2}(1+t)$ does not depend
on $z_{1},z_{2}\in C\cap U^{1}$. Consider the map
\[
\rho:U^{el'}\longrightarrow U^{el'}/U^{el'+1}\longiso\mfA/\mfP,
\]
where the isomorphism is given by $(1+\varpi^{l'}x)U^{e_{\mathrm{}}l'+1}\mapsto x+\mfP$.
Let $\beta+\mfP$ be the image of $\beta$ in $\mfA/\mfP$ under this
map. 
\begin{lem}
\label{lem:Radical}We have 
\[
R_{\beta}=(C\cap U^{1})\cdot\rho^{-1}(C_{\mfA/\mfP}(\beta+\mfP)).
\]
\end{lem}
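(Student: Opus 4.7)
The strategy is to use Lemma~\ref{lem:theta-commutator-1} to evaluate the form $h_\beta$ explicitly via the trace, reduce the radical condition to a congruence on $[\beta,s]$, and translate that congruence through the map $\rho$ to the quotient $\mfA/\mfP$.

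First, I would take arbitrary elements $x = z_1(1+s)$ and $y = z_2(1+t)$ of $J^1$, with $z_1, z_2 \in C \cap U^1$ and $s, t \in \mfP^{el'}$. By Lemma~\ref{lem:theta-commutator-1} and the cyclic invariance of the trace,
\[
h_\beta(xH^1, yH^1) \;=\; \psi_\beta(1+(st-ts)) \;=\; \psi\!\bigl(\varpi^{-r}\tr([\beta,s]\,t)\bigr).
\]
In particular, the value is independent of $z_1$ and $z_2$, so the condition for $xH^1$ to lie in $\overline{R}_\beta$ depends only on $s$.

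Next, since every coset in $J^1/H^1$ is represented by some $(1+t)$ with $t \in \mfP^{el'}$, the element $xH^1$ lies in $\overline{R}_\beta$ iff $\psi(\varpi^{-r}\tr([\beta,s]\,\mfP^{el'})) = 1$. Because $\psi$ has conductor $\mfo$, this is equivalent to $\tr([\beta,s]\,\mfP^{el'}) = 0$ in $\mfo_r$. Using $r = 2l'+1$, one computes $e(r-1)+1-el' = el'+1$, so Lemma~\ref{lem:Trace-lemma} (with $m = el'$) converts the condition into $[\beta, s] \in \mfP^{el'+1}$.

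The remaining step is to translate this commutator condition through $\rho$. By Corollary~\ref{cor:AP-Formulae} we have $\mfP^{el'} = \mfp^{l'}\mfA$ and $\mfP^{el'+1} = \varpi^{l'}\mfP$, so I would write $s = \varpi^{l'}s'$ with $s' \in \mfA$; then $[\beta,s] = \varpi^{l'}[\beta,s']$. Since the kernel of multiplication by $\varpi^{l'}$ on $\mfA$ equals $\mfp^{r-l'}\mfA = \mfp^{l'+1}\mfA = \mfP^{e(l'+1)} \subseteq \mfP$, the condition $[\beta,s] \in \mfP^{el'+1}$ is equivalent to $[\beta,s'] \in \mfP$, i.e., to $\rho(1+s) = s' + \mfP \in C_{\mfA/\mfP}(\beta+\mfP)$. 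Taking the preimage of $\overline{R}_\beta$ in $J^1$ and noting that the $z_1$-component is free gives
\[
R_\beta \;=\; (C\cap U^1)\cdot \rho^{-1}\bigl(C_{\mfA/\mfP}(\beta+\mfP)\bigr),
\]
as claimed. The only slightly delicate point is the last paragraph, where one must juggle multiplication by $\varpi^{l'}$ between the filtration levels $\mfP^{el'}$ and $\mfP^{el'+1}$ and the quotient $\mfA/\mfP$; everything else is a direct application of earlier lemmas.
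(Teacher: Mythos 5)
Your proposal is correct and follows essentially the same route as the paper: evaluate $h_\beta$ via Lemma~\ref{lem:theta-commutator-1}, use the duality $(\mfP^{el'})^{\perp}=\mfP^{el'+1}$ (you invoke Lemma~\ref{lem:Trace-lemma} directly where the paper cites Lemma~\ref{lem:characters}\,\ref{enu:ortho}, which is the same fact), and then divide by $\varpi^{l'}$ to land in $C_{\mfA/\mfP}(\beta+\mfP)$. Your extra care with the kernel of multiplication by $\varpi^{l'}$ is a welcome refinement of a point the paper passes over silently.
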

\begin{proof}
By definition, $x\in R_{\beta}$ if and only if $\theta([x,y])=1$
for all $y\in J^{1}$. Writing $x=z_{1}(1+s)$, $y=z_{2}(1+t)$ as
in Lemma~\ref{lem:theta-commutator-1}, we have
\[
\theta([x,y])=\psi_{\beta}(1+(st-ts))=\psi(\varpi^{-r}\tr t(\beta s-s\beta)),
\]
so $\theta([x,y])=1$ for all $y\in J^{1}$ is equivalent to $\psi(\varpi^{-r}\tr(\mfP^{el'}(\beta s-s\beta)))=1$,
that is, $\beta s-s\beta\in(\mfP^{el'})^{\perp}=\mfP^{el'+1}$ (see
Lemma~\ref{lem:characters}). The latter is equivalent to $\rho(1+s)\in C_{\mfA/\mfP}(\beta+\mfP)$
because if we write $s=\varpi^{l'}s_{0}$, we have $\rho(1+s)=s_{0}+\mfP$
and $\beta s-s\beta\in\mfP^{el'+1}$ is then equivalent to $\beta s_{0}-s_{0}\beta\in\mfP$,
that is, $s_{0}+\mfP\in C_{\mfA/\mfP}(\beta+\mfP)$. Thus we have
shown that $x=z_{1}(1+s)\in R_{\beta}$ if and only if $1+s\in\rho^{-1}(C_{\mfA/\mfP}(\beta+\mfP))$.
\end{proof}
\begin{lem}
\label{lem: dim-etam-etaM} With notation as above, the following
holds:
\begin{enumerate}
\item \label{enu:index}$[J^{1}:R_{\beta}]=\left|\frac{\mfA/\mfP}{C_{\mfA/\mfP}(\beta+\mfP)}\right|$.
\item \label{enu:dim-Max}Suppose that $\mfA=\Amax$. Then the form $h_{\beta}$
on $\Jmax^{1}/\Hmax^{1}$ is non-degenerate. Thus, for every $\thetaM$,
there exists a unique $\etaM\in\Irr(J^{1}\mid\thetaM)$ and 
\[
\dim\etaM=q^{N(N-1)/2}.
\]
\item \label{enu:dim-Min}Suppose that $\mfA=\Amin$. Then, for every extension
$\tilde{\theta}_{\mathrm{m}}$ of $\thetam$ to $R_{\beta}$, there
exists a unique $\etam\in\Irr(J^{1}\mid\tilde{\theta}_{\mathrm{m}})$
and
\[
\dim\etam=\prod_{i=1}^{h}q^{d_{i}m_{i}(d_{i}-1)/2}.
\]
 
\end{enumerate}
\end{lem}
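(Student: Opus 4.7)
The plan is to establish (i) by a direct index calculation, deduce (ii) by showing non-degeneracy of $h_\beta$ in the maximal case via a lifting argument, and deduce (iii) by direct application of Corollary~\ref{cor:Bushnell-Frohlich}; the dimension formulae in (ii) and (iii) then follow from (i) combined with Lemma~\ref{lem:Centr_A/P-orders}.

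For (i), I would first note that $H^{1}\subseteq R_{\beta}\subseteq J^{1}$ since $h_{\beta}$ vanishes on $H^{1}$. Using $J^{1}=(C\cap U^{1})U^{el'}$ and $H^{1}=(C\cap U^{1})U^{el'+1}$, one checks that $J^{1}\cap U^{el'}=U^{el'}$ and $H^{1}\cap U^{el'}=(C\cap U^{el'})U^{el'+1}$, so $J^{1}/H^{1}\cong U^{el'}/(C\cap U^{el'})U^{el'+1}$. By Lemma~\ref{lem:Radical} and the inclusion $C\cap U^{el'}\subseteq \rho^{-1}(C_{\mfA/\mfP}(\beta+\mfP))$, an analogous computation gives $R_{\beta}\cap U^{el'}=\rho^{-1}(C_{\mfA/\mfP}(\beta+\mfP))$. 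The isomorphism $U^{el'}/U^{el'+1}\iso \mfA/\mfP$ induced by $\rho$ then immediately yields $[J^{1}:R_{\beta}]=[\mfA/\mfP:C_{\mfA/\mfP}(\beta+\mfP)]$.

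For (ii), combining (i) with Lemma~\ref{lem:Centr_A/P-orders} gives $[\Jmax^{1}:R_{\beta}]=q^{N(N-1)}$, so it suffices to show $R_{\beta}=\Hmax^{1}$. By Lemma~\ref{lem:Radical}, this amounts to proving $\rho^{-1}(C_{\mfg_{1}}(\bar{\beta}))\subseteq (C\cap K^{1})K^{l}$. Given $1+\varpi^{l'}x\in K^{l'}$ with $\bar{x}\in C_{\mfg_{1}}(\bar{\beta})$, regularity of $\bar{\beta}$ yields $C_{\mfg_{1}}(\bar{\beta})=\F_{q}[\bar{\beta}]$, so I can lift $\bar{x}$ to some $y\in\mfo_{r}[\beta]$. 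Since $l'\geq 1$, the element $1+\varpi^{l'}y$ lies in $C\cap K^{l'}\subseteq C\cap K^{1}$, and $(1+\varpi^{l'}y)^{-1}(1+\varpi^{l'}x)\in K^{l}$ because $x-y\in\mfp\mfg_{r}$ and $2l'\geq l$. The required factorisation follows, so $h_{\beta}$ is non-degenerate and Lemma~\ref{lem:Bushnell-Frohlich} provides the unique $\etaM$ of dimension $q^{N(N-1)/2}$.

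For (iii), the form may be degenerate, so I would apply Corollary~\ref{cor:Bushnell-Frohlich} directly: it provides extensions $\tilde{\theta}_{\mathrm{m}}$ of $\thetam$ to $R_{\beta}$ and, for each one, a unique $\etam\in\Irr(\Jmin^{1}\mid\tilde{\theta}_{\mathrm{m}})$ of dimension $[\Jmin^{1}:R_{\beta}]^{1/2}$. From (i) together with the product decompositions $\Amin/\Pmin\cong\prod_{i}\M_{d_{i}}(\F_{q})^{m_{i}}$ and $C_{\Amin/\Pmin}(\betam)\cong\prod_{i}\F_{q}[\betam^{i}]^{m_{i}}$ (computed in the proof of Lemma~\ref{lem:Centr_A/P-orders}), we read off $[\Jmin^{1}:R_{\beta}]=\prod_{i}q^{d_{i}m_{i}(d_{i}-1)}$, giving the stated dimension. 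The main obstacle in the proof is the lifting step in (ii): it is precisely where the identity $C_{\mfg_{1}}(\bar{\beta})=\F_{q}[\bar{\beta}]$ coming from regularity is used, and the failure of the analogous identity for $\betam\in\Amin/\Pmin$ is exactly what allows the radical to be nontrivial in (iii).
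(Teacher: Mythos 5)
Your proof is correct and follows essentially the same route as the paper: part \emph{(i)} via Lemma~\ref{lem:Radical} and the isomorphism $U^{el'}/U^{el'+1}\cong\mfA/\mfP$, part \emph{(ii)} by showing $R_{\beta}=\Hmax^{1}$ through a lifting argument, and part \emph{(iii)} by direct appeal to Corollary~\ref{cor:Bushnell-Frohlich}. The only (cosmetic) difference is in \emph{(ii)}: you lift centralising elements explicitly using $C_{\mfg_{1}}(\bar{\beta})=\F_{q}[\bar{\beta}]$ and polynomial lifts to $\mfo_{r}[\beta]$, whereas the paper establishes the same surjectivity $C_{K^{l'}}(\beta)\rightarrow C_{\mfg_{1}}(\bar{\beta})$ via the $\mfo_{l}$-module structure of $C_{\mfg_{l}}(\beta_{l})$; both hinge on regularity in the same way.
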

\begin{proof}
By Lemma~\ref{lem:Radical}, we have 
\begin{align*}
J^{1}/R_{\beta} & \cong\frac{U^{el'}}{(C\cap U^{el'})\rho^{-1}(C_{\mfA/\mfP}(\beta+\mfP))}=\frac{U^{el'}}{\rho^{-1}(C_{\mfA/\mfP}(\beta+\mfP))}\\
 & \cong\frac{U^{el'}/U^{el'+1}}{\rho^{-1}(C_{\mfA/\mfP}(\beta+\mfP))/U^{el'+1}}\cong\frac{\mfA/\mfP}{C_{\mfA/\mfP}(\beta+\mfP)}.
\end{align*}
Next, suppose that $\mfA=\Amax=\mfg_{r}$. We then have $\mfA/\mfP=\mfg_{1}$
and $\beta+\mfP=\bar{\beta}$. By Lemma~\ref{lem:Radical}, we need
to show that $\rho^{-1}(C_{\mfg_{1}}(\bar{\beta}))\subseteq H^{1}$,
and this holds if and only if the map 
\begin{equation}
C_{K^{l'}}(\beta)\longrightarrow C_{\mfg_{1}}(\bar{\beta})\label{eq:surject-centr}
\end{equation}
induced by $\rho$, is surjective. To show the latter, first note
that the map $C_{K^{l'}}(\beta)\rightarrow C_{\mfg_{l}}(\beta_{l})$,
$1+\pi^{l'}x\mapsto x_{l}$ is easily seen to be an isomorphism. Now,
$C_{\mfg_{l}}(\beta_{l})$ is an $\mfo_{l}$-module so the map 
\[
C_{\mfg_{l}}(\beta_{l})\longrightarrow C_{\mfg_{1}}(\bar{\beta})\cong C_{\mfg_{l}}(\beta_{l})/\mfp C_{\mfg_{l}}(\beta_{l})
\]
given by $x\mapsto\bar{x}=x+\mfP$ is surjective. Hence the map \eqref{eq:surject-centr}
is surjective, so the form $h_{\beta}$ is indeed non-degenerate on
$\Jmax^{1}/\Hmax^{1}$. By Lemma~\ref{lem:Bushnell-Frohlich}, there
exists a unique $\etaM\in\Irr(J^{1}\mid\thetaM)$ and by \ref{enu:index}
together with Lemma~\ref{lem:Centr_A/P-orders}, its dimension is
\[
\dim\etaM=[\Jmax^{1}:R_{\beta,\mathrm{M}}]^{1/2}=\left|\mfg_{1}/C_{\mfg_{1}}(\bar{\beta})\right|^{1/2}=q^{N(N-1)/2}.
\]
Finally, suppose that $\mfA=\Amin$, and let $\tilde{\theta}_{\mathrm{m}}$
be an arbitrary extension of $\thetam$ to $R_{\beta}$. By Corollary~\ref{cor:Bushnell-Frohlich},
there exists a unique $\etam\in\Irr(J^{1}\mid\tilde{\theta}_{\mathrm{m}})$
and by \ref{enu:index} together with Lemma~\ref{lem:Centr_A/P-orders},
its dimension is 
\begin{multline*}
\dim\etam=[\Jmin^{1}:R_{\beta,\mathrm{m}}]^{1/2}=\left|\frac{\Amin/\Pmin}{C_{\Amin/\Pmin}(\betam)}\right|^{1/2}\\
=\left(\frac{\prod_{i=1}^{h}q^{d_{i}^{2}m_{i}}}{q^{N}}\right)^{1/2}=\prod_{i=1}^{h}q^{(d_{i}^{2}m_{i})/2}\prod_{i=1}^{h}q^{-d_{i}m_{i}/2}\\
=\prod_{i=1}^{h}q^{d_{i}m_{i}(d_{i}-1)/2}.
\end{multline*}
\end{proof}
\begin{rem}
In the proof of the second part of the above lemma we used the fact
that the map $C_{\Umax^{\emax l'}}(\beta)\longrightarrow C_{\Amax/\Pmax}(\bar{\beta})$
induced by $\rho$ is surjective. We remark that the corresponding
map $C_{\Umin^{\emin l'}}(\beta)\longrightarrow C_{\Amin/\Pmin}(\betam)$
is \emph{not} surjective in general. Consequently, $\eta_{\mathrm{m}}$
is not the only representation containing $\theta_{\mathrm{m}}$.
This will not matter for us, since we will only need the existence
of an $\etam$. However, the uniqueness of $\etaM$ expressed in the
above lemma is crucial, because without it we would not be able to
deduce that the representation $\eta$ defined below is an extension
of $\etaM$.

Note also that a quicker proof of the non-degeneracy of $h_{\beta}$
in the second part of the lemma is to observe that $[\Jmax^{1}:R_{\beta,\mathrm{M}}]=[\Jmax^{1}:\Hmax^{1}]$.
In the above proof, we have emphasised the surjectivity of \eqref{eq:surject-centr}.
\end{rem}
We now prove a series of lemmas whose purpose is to show the existence
of an extension of $\etaM$ to $CK^{l'}$. Once this is achieved,
the construction is easily completed.
\begin{lem}
\label{lem:eta-extn-etaM} Let 
\[
\eta:=\Ind_{\Jmin^{1}}^{\JmM}\eta_{\mathrm{m}}.
\]
Then $\dim\eta=\dim\etam$ and thus $\eta$ is an extension of $\etaM$.
\end{lem}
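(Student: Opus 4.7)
The plan is to compute $\dim\eta$ directly and show it equals $\dim\etaM$, then exploit the uniqueness of $\etaM$ in $\Irr(\Jmax^1\mid\thetaM)$ to conclude that $\eta|_{\Jmax^1}=\etaM$, i.e., $\eta$ extends $\etaM$.

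First I would compute the index $[\JmM:\Jmin^1]$. Applying the order formula $|J_*^1|=|C\cap U_*^1|\,|U_*^{e_*l'}|/|C\cap U_*^{e_*l'}|$ (valid because $U_*^{e_*l'}\subseteq U_*^1$), together with the inclusion $K^{l'}\subseteq\Umin^1$ (since $\mfp^{l'}\mfg_r\subseteq\mfp\mfg_r\subseteq\Pmin$), the index reduces to
\[
[\JmM:\Jmin^1]=\frac{[K^{l'}:\Umin^{\emin l'}]}{[C\cap K^{l'}:C\cap\Umin^{\emin l'}]}.
\]
The hard part, and the main obstacle, will be to show the denominator equals~$1$, i.e., $C\cap K^{l'}\subseteq\Umin^{\emin l'}$. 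For $c=1+\varpi^{l'}y\in C\cap K^{l'}$ with $y\in\mfg_r$, the relation $c\beta=\beta c$ forces $\varpi^{l'}(y\beta-\beta y)=0$ in $\mfg_r$, so $y_l\in C_{\mfg_l}(\beta_l)$. By regularity of $\beta_l$ and the cited result of Hill, $C_{\mfg_l}(\beta_l)=\mfo_l[\beta_l]$, which is contained in the image $\Amin_l$ of $\Amin$ in $\mfg_l$ because $\beta\in\Amin$. Hence $y\in\Amin+\mfp^l\mfg_r$, and multiplying by $\varpi^{l'}$ absorbs the $\mfp^l\mfg_r$ term, giving $\varpi^{l'}y\in\varpi^{l'}\Amin$ and so $c\in\Umin^{\emin l'}$.

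For the numerator, using $l=l'+1$ both $\Umin^{\emin l'}$ and $K^{l'}$ contain $K^l$, and the isomorphism $K^{l'}/K^l\iso\mfg_1$ given by $1+\varpi^{l'}x\mapsto\bar x$ identifies $\Umin^{\emin l'}/K^l$ with the image $\bar P$ of $\Amin$ in $\mfg_1$ (the standard parabolic of type~$\lambda$). A direct count of matrix entries gives $|\bar P|=q^{(N^2+\sum_i d_i^2 m_i)/2}$, hence $[K^{l'}:\Umin^{\emin l'}]=q^{(N^2-\sum_i d_i^2 m_i)/2}$. Combining with the formula $\dim\etam=\prod_i q^{d_im_i(d_i-1)/2}$ from Lemma~\ref{lem: dim-etam-etaM}\,\ref{enu:dim-Min} and the identity $\sum_i d_im_i=N$, a short arithmetic check yields
\[
\dim\eta=[\JmM:\Jmin^1]\cdot\dim\etam=q^{N(N-1)/2}=\dim\etaM.
\]

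For the extension step: since $\eta|_{\Jmin^1}$ contains $\etam$, the restriction $\eta|_{\Hmin^1}$ contains $\thetam$, and restricting further to $\Hmax^1\subseteq\Hmin^1$ (where $\thetam$ restricts to $\thetaM$) gives that $\eta|_{\Hmax^1}$ contains $\thetaM$. Thus some irreducible constituent of $\eta|_{\Jmax^1}$ contains $\thetaM$, and by the uniqueness assertion in Lemma~\ref{lem: dim-etam-etaM}\,\ref{enu:dim-Max} that constituent must be $\etaM$. The dimension equality $\dim\eta=\dim\etaM$ just established then forces $\eta|_{\Jmax^1}=\etaM$, which is precisely the statement that $\eta$ is an extension of $\etaM$.
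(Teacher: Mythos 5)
Your proposal is correct and follows essentially the same route as the paper's proof: both hinge on the inclusion $C\cap K^{l'}\subseteq\Umin^{\emin l'}$ (which you derive directly from $C_{\mfg_{l}}(\beta_{l})=\mfo_{l}[\beta_{l}]$ lying in the image of $\Amin$, just as the paper does via Lemma~\ref{lem:Centr-in-Am}), reduce $[\JmM:\Jmin^{1}]$ to $|\mfg_{1}/\overline{\mfA}_{\mathrm{m}}|$, and carry out the same dimension arithmetic to get $\dim\eta=q^{N(N-1)/2}=\dim\etaM$. The final step, using that $\eta|_{\Hmax^{1}}$ contains $\thetaM$ together with the uniqueness of $\etaM\in\Irr(\Jmax^{1}\mid\thetaM)$ and the equality of dimensions to force $\eta|_{\Jmax^{1}}=\etaM$, is exactly the paper's argument.
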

\begin{proof}
We first need to determine the dimension of the induced representation
Given Lemma~\ref{lem: dim-etam-etaM}, we only need to compute the
index $[\JmM:\Jmin^{1}]$. We claim that $C\cap K^{l'}\subseteq\Umin^{\emin l'}$.
Indeed, we have the relations
\[
C\cap K^{l'}=1+\mfp^{l'}\rho_{r,l}^{-1}(C_{\mfg_{l}}(\beta_{l}))\subseteq1+\mfp^{l'}\Amin,
\]
where the inclusion follows from our assumption that $\beta\in\Amin$
together with Lemma~\eqref{lem:Centr-in-Am}, guaranteeing that $\rho_{l,1}(C_{\mfg_{l}}(\beta_{l}))\subseteq\overline{\mfA}_{\mathrm{m}}$,
and so $\rho_{r,l}^{-1}(C_{\mfg_{l}}(\beta_{l}))\subseteq\rho_{r,1}^{-1}(\overline{\mfA}_{\mathrm{m}})=\Amin$.
We now have

\begin{align*}
\JmM/\Jmin^{1}\cong\frac{K^{l'}}{(C\cap K^{l'})\Umin^{\emin l'}} & =\frac{K^{l'}}{\Umin^{\emin l'}},
\end{align*}
where the equality follows from the above claim. Furthermore, the
map 
\[
\frac{K^{l'}}{\Umin^{\emin l'}}\longrightarrow\mfg_{1}/\overline{\mfA}_{\mathrm{m}},\qquad(1+\pi^{l'}x)\Umin^{\emin l'}\longmapsto\bar{x}+\overline{\mfA}_{\mathrm{m}}
\]
(recall that $\overline{\mfA}_{\mathrm{m}}$ is the image of $\Amin$
in $\mfg_{1}$) is an isomorphism, and we have 
\[
\left|\mfg_{1}/\overline{\mfA}_{\mathrm{m}}\right|=\frac{q^{N^{2}}}{q^{N^{2}/2}\prod_{i=1}^{h}q^{d_{i}^{2}m_{i}/2}}=q^{N^{2}/2}\prod_{i=1}^{h}q^{-d_{i}^{2}m_{i}/2}.
\]
Thus, by Lemma~\ref{lem: dim-etam-etaM}, we have 
\begin{align*}
\dim\eta & =\dim\etam\cdot\left|\mfg_{1}/\overline{\mfA}_{\mathrm{m}}\right|=\prod_{i=1}^{h}q^{d_{i}m_{i}(d_{i}-1)/2}q^{N^{2}/2}\prod_{i=1}^{h}q^{-d_{i}^{2}m_{i}/2}\\
 & =q^{N(N-1)/2}=\dim\etaM.
\end{align*}
By construction, the representation $\eta$ contains $\thetaM$ on
restriction to $\Hmax^{1}$. Hence, the representation $\eta|_{\Jmax^{1}}$
contains $\thetaM$ on restriction to $\Hmax^{1}$. By Lemma~\ref{lem: dim-etam-etaM}\,\ref{enu:dim-Max}
$\etaM$ is the unique representation of $\Jmax^{1}$ which contains
$\thetaM$, so it follows that $\eta$ contains $\etaM$ on restriction
to $\Jmax^{1}$. The equality of dimensions $\dim\eta=\dim\etaM$
then forces $\eta|_{\Jmax^{1}}=\etaM$, so that $\eta$ is an extension
of $\etaM$.
\end{proof}
\begin{lem}
\label{lem:p-Sylow-extension}Let $G$ be a finite group, $N$ a normal
$p$-subgroup of $G$, and $P$ a Sylow $p$-subgroup of $G$. Suppose
that $\chi\in\Irr(N)$ is stabilised by $G$ and that $\chi$ has
an extension to $P$. Then $\chi$ has an extension to $G$.
\end{lem}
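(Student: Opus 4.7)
The plan is to reduce to a Sylow-local problem via Isaacs' criterion for character extension (Isaacs, \emph{Character Theory of Finite Groups}, Thm.~11.31): for $N \triangleleft G$ and a $G$-invariant $\chi \in \Irr(N)$, $\chi$ extends to $G$ if and only if for every prime $q$ dividing $|G/N|$ there is a subgroup $H_{q}$ with $N \leq H_{q} \leq G$, $H_{q}/N$ a Sylow $q$-subgroup of $G/N$, and $\chi$ extendible to $H_{q}$. It therefore suffices to produce such an $H_{q}$ for each prime separately.

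For $q = p$: Since $N$ is a normal $p$-subgroup and $P$ a Sylow $p$-subgroup of $G$, the product $NP$ is a $p$-subgroup of $G$ containing $P$, forcing $N \leq P$. Hence $P/N \in \mathrm{Syl}_{p}(G/N)$, and the given extension of $\chi$ to $P$ supplies $H_{p} := P$ directly, with no further work required.

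For $q \neq p$: Let $Q$ be a Sylow $q$-subgroup of $G$. Since $N$ is a $p$-group and $Q$ a $q$-group with $p \neq q$, one has $Q \cap N = 1$, so $H_{q} := QN$ contains $N$ as a normal Hall $p$-subgroup, and $H_{q}/N \cong Q$ is a Sylow $q$-subgroup of $G/N$. As $\chi$ is $H_{q}$-invariant, Gallagher's theorem on extension of invariant characters to normal Hall subgroups (Isaacs, Cor.~6.28) yields an extension of $\chi$ to $H_{q}$.

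Once the Sylow-local hypothesis is verified at every prime, Isaacs' criterion produces an extension of $\chi$ to $G$. The main conceptual input is simply the invocation of these two classical results; the $p$-part is automatic from $N \leq P$, and the primes coprime to $p$ are handled uniformly because $N$ is a Hall subgroup of $QN$ whenever $q \neq p$. There is no substantial obstacle: the only thing to keep straight is which prime plays which role, and that the hypothesis ``$\chi$ extends to $P$'' is strictly needed only to cover the prime $p$, where Gallagher's theorem is unavailable.
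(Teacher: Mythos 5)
Your proof is correct and follows essentially the same route as the paper's: reduce via Isaacs (11.31) to Sylow subgroups of $G/N$, use the given extension to $P$ (after noting $N\leq P$) for the prime $p$, and invoke Gallagher's theorem for the primes $q\neq p$. The only cosmetic difference is that you verify the hypothesis for one Sylow subgroup per prime (which is all (11.31) requires), whereas the paper also conjugates the extension on $P$ to cover all Sylow $p$-subgroups.
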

\begin{proof}
By \cite[(11.31)]{Isaacs}, $\chi$ will extend to $G$ if it extends
to every $H\subseteq G$ such that $H/N$ is a Sylow subgroup of $G/N$.
By assumption, $\chi$ has an extension, say $\tilde{\chi}$, to $P$,
so if $Q$ is any other Sylow $p$-subgroup of $G$, then $Q=gPg^{-1}$
for some $g\in G$, and so $^{g}\tilde{\chi}$ is an extension of
$\chi$ to $Q$, because $^{g}\chi=\chi$. Suppose now that $P'\subseteq G$
is a subgroup such that $P'/N$ is a Sylow $p'$-subgroup of $G/N$,
for some prime $p'\neq p$. Then $p$ does not divide the index $[P':N]$,
so by a theorem of Gallagher (see \cite[Theorem~6]{Gallagher} or
\cite[(8.16)]{Isaacs}) $\chi$ extends to $P'$. Thus $\chi$ extends
to $G$.
\end{proof}
\begin{lem}
\label{lem:Stab-etaM}We have $CK^{l'}=\Stab_{CK^{l'}}(\etaM)$.
\end{lem}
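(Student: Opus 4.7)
The plan is to reduce the claim to showing that $CK^{l'}$ stabilises the character $\thetaM$ itself, and then invoke the uniqueness of $\etaM$ from Lemma~\ref{lem: dim-etam-etaM}\,(ii). Since both $\Hmax^{1}$ and $\Jmax^{1}$ are normal in $CK^{l'}$, for any $g \in CK^{l'}$ the twist ${}^{g}\etaM$ is an irreducible representation of $\Jmax^{1}$, and ${}^{g}\etaM|_{\Hmax^{1}} = {}^{g}(\etaM|_{\Hmax^{1}})$ is a multiple of ${}^{g}\thetaM$. Once we know that ${}^{g}\thetaM = \thetaM$, then ${}^{g}\etaM$ lies in $\Irr(\Jmax^{1} \mid \thetaM)$, which has a unique element by Lemma~\ref{lem: dim-etam-etaM}\,(ii), forcing ${}^{g}\etaM = \etaM$. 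It therefore suffices to show that both $C$ and $K^{l'}$ stabilise $\thetaM$.

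Using the expression $\thetaM = \theta_{0}\psi_{\beta}$ from Remark~\ref{rem:theta_0psi_beta} and writing a generic element of $\Hmax^{1}$ as $x = zk$ with $z \in C \cap K^{1}$, $k \in K^{l}$, the case $g = c \in C$ is straightforward: $cxc^{-1} = z(ckc^{-1})$ because $C$ is abelian, and $\psi_{\beta}(ckc^{-1}) = \psi_{\beta}(k)$ because $c$ centralises $\beta$. For $g \in K^{l'}$, the commutator relations $[K^{l}, K^{l'}] \subseteq K^{r} = \{1\}$ and $[K^{1}, K^{l'}] \subseteq K^{l'+1} = K^{l}$ imply that $gxg^{-1} = z \cdot [z^{-1}, g] \cdot k$ with $[z^{-1}, g] \in K^{l}$, so the stabilisation by $g$ reduces to verifying $\psi_{\beta}([z, g]) = 1$ for every $z \in C \cap K^{1}$ and $g \in K^{l'}$.

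This last step is the main obstacle, and is the place where the hypothesis that $r$ is odd is used decisively. Writing $z = 1 + s$ (so that $s\beta = \beta s$ and $s \in \mfp\mfg_{r}$) and $g = 1 + \varpi^{l'}y$, the identity $zgz^{-1} = 1 + \varpi^{l'}zyz^{-1}$ together with the geometric expansion of $g^{-1}$ yields
$$
[z, g] \;=\; 1 + \sum_{j \geq 1}(-1)^{j-1}\varpi^{jl'}(zyz^{-1} - y)\,y^{j-1}.
$$
Now $zyz^{-1} - y = (sy - ys)z^{-1} \in \mfp\mfg_{r}$, and since $r$ is odd one has $2l' + 1 = r$, whence every term with $j \geq 2$ lies in $\mfp^{jl'+1}\mfg_{r} \subseteq \mfp^{r}\mfg_{r} = \{0\}$. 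Hence $[z, g] = 1 + \varpi^{l'}(zyz^{-1} - y)$ exactly, and cyclicity of the trace combined with $z^{-1}\beta z = \beta$ gives $\tr(\beta(zyz^{-1} - y)) = 0$, so that $\psi_{\beta}([z, g]) = 1$ as required.
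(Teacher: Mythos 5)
Your proof is correct and follows essentially the same route as the paper: both reduce the claim to the stabilisation of $\thetaM$ via the uniqueness of $\etaM$, use the decomposition $\thetaM=\theta_{0}\psi_{\beta}$, and conclude with the same commutator computation $\psi_{\beta}([z,g])=1$ using $\mfp^{r}\mfg_{r}=0$ and cyclicity of the trace. The only (immaterial) difference is that you treat the generators $C$ and $K^{l'}$ separately, whereas the paper handles a general element $g=zu$ in one computation.
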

\begin{proof}
Recall that we saw in Section~\eqref{sec:Parahoric} that $\Hmax^{1}$
and $\Jmax^{1}$ are normal in $CK^{l'}$. Let $g\in CK^{l'}$. Since
$\etaM$ is the unique representation in $\Irr(\Jmax^{1}\mid\thetaM)$,
we have $g\in\Stab_{CK^{l'}}(\etaM)$ if and only if $g\in\Stab_{CK^{l'}}(\thetaM)$,
so we need to show that $g$ stabilises $\thetaM$. To this end, write
$g=zu$ with $z\in C$, $u\in K^{l'}$, and $x=z'v$ with $z'\in C\cap K^{1}$,
$v\in K^{l}$. Then 
\[
gxg^{-1}=z'\cdot z([z'^{-1},u](uvu^{-1}))z^{-1},
\]
where $z([z'^{-1},u](uvu^{-1}))z^{-1}\in K^{l}$. Write $\thetaM=\theta_{0}\psi_{\beta}$
as in Remark~\ref{rem:theta_0psi_beta}. Then
\begin{align*}
\thetaM(gxg^{-1}) & =\theta_{0}(z')\psi_{\beta}(z([z'^{-1},u](uvu^{-1}))z^{-1})\\
 & =\theta_{0}(z')\psi_{\beta}([z'^{-1},u])\psi_{\beta}(v)\\
 & =\thetaM(x)\psi_{\beta}([z'^{-1},u]),
\end{align*}
where the second equality follows since $CK^{l'}$ stabilizes $\psi_{\beta}$.
To show that $\thetaM$ is stabilized by $g$ it thus remains to show
that $\psi_{\beta}([z'^{-1},u])=1$. To this end, write $u=1+s$,
with $s\in\mfp^{l'}\mfg_{r}$ and observe that 
\[
[z'^{-1},u]=z'^{-1}(1+s)z'(1-s+s^{2})=z'^{-1}(1+s)z'-s,
\]
where we have used that $z'^{-1}sz's=s^{2}$. Thus $\psi_{\beta}([z'^{-1},u])=\psi(\varpi^{-r}\tr\beta(z'^{-1}sz'-s))=\psi(\varpi^{-r}\tr(z'^{-1}(\beta s)z'-\beta s))=1$,
as required.
\end{proof}
\begin{thm}
\label{thm:Main-regular}Suppose that the orbit $\Omega_{l'}$ consists
of regular elements and let $\beta\in\Omega_{r}\cap\Amin$. Then,
for any extension $\thetaM$ of $\psi_{\beta}$, the representation
$\etaM$ has an extension $\hatetaM$ to $CK^{l'}$, where $C=C_{G_{r}}(\beta)$.
Any representation in $\Irr(G_{r}\mid\psi_{\beta})$ is of the form
\[
\pi(\thetaM,\hatetaM):=\Ind_{CK^{l'}}^{G_{r}}\hatetaM,
\]
for some $\thetaM$ and $\hatetaM$, and if another representation
$\pi(\thetaM',\hatetaM')\in\Irr(G_{r}\mid\psi_{\beta})$ is isomorphic
to $\pi(\thetaM,\hatetaM)$, then $\thetaM\cong\thetaM'$ and $\hatetaM\cong\hatetaM'$. 
\end{thm}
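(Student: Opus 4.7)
The plan is to combine the structural results built up over the previous lemmas with standard Clifford theory. I would organise the argument into four steps.

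\emph{Step 1: Existence of $\hatetaM$.} I would apply Lemma~\ref{lem:p-Sylow-extension} with
\[
G = CK^{l'}, \qquad N = \Jmax^1, \qquad P = \JmM, \qquad \chi = \etaM.
\]
All the hypotheses are already in place: $\Jmax^1$ is a normal $p$-subgroup of $CK^{l'}$ (from Section~\ref{sec:Parahoric}); $\JmM$ is a Sylow $p$-subgroup of $CK^{l'}$ by Lemma~\ref{lem:p-Sylow}; $\etaM$ is stabilised by $CK^{l'}$ by Lemma~\ref{lem:Stab-etaM}; and $\etaM$ admits the extension $\eta = \Ind_{\Jmin^1}^{\JmM} \etam$ to $\JmM$ by Lemma~\ref{lem:eta-extn-etaM}. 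The conclusion is an irreducible extension $\hatetaM \in \Irr(CK^{l'})$ of $\etaM$.

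\emph{Step 2: Irreducibility of $\pi(\thetaM, \hatetaM)$.} Restricting $\hatetaM$ down the tower $\Jmax^1 \supseteq \Hmax^1 \supseteq K^l$ recovers successively $\etaM$, $\thetaM$, and $\psi_\beta$, so $\hatetaM \in \Irr(CK^{l'} \mid \psi_\beta)$. Since $CK^{l'} = \Stab_{G_r}(\psi_\beta)$, Clifford theory provides the bijection
\[
\Ind_{CK^{l'}}^{G_r} \colon \Irr(CK^{l'} \mid \psi_\beta) \longiso \Irr(G_r \mid \psi_\beta),
\]
and hence $\pi(\thetaM, \hatetaM)$ is irreducible and lies in $\Irr(G_r \mid \psi_\beta)$.

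\emph{Step 3: Exhaustiveness.} Given $\pi \in \Irr(G_r \mid \psi_\beta)$, the bijection above produces a unique $\sigma \in \Irr(CK^{l'} \mid \psi_\beta)$ with $\pi \cong \Ind_{CK^{l'}}^{G_r} \sigma$. Pick an extension $\thetaM$ of $\psi_\beta$ appearing in $\sigma|_{\Hmax^1}$; the uniqueness part of Lemma~\ref{lem: dim-etam-etaM} then forces the associated $\etaM$ to appear in $\sigma|_{\Jmax^1}$. Using the extension $\hatetaM$ produced in Step~1, Gallagher's theorem yields $\sigma \cong \hatetaM \otimes \phi$ for some $\phi \in \Irr(CK^{l'}/\Jmax^1)$. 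The key observation I expect to need is that $CK^{l'}/\Jmax^1 \cong C/(C \cap K^1)$ is \emph{abelian}---a consequence of the abelianness of $C$, which itself is forced by the regularity of $\beta$. Hence $\phi$ is one-dimensional, $\sigma|_{\Jmax^1}$ equals $\etaM$, and $\sigma$ is itself an extension $\hatetaM'$ of $\etaM$, so that $\pi \cong \pi(\thetaM, \hatetaM')$.

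\emph{Step 4: Uniqueness of parameters, and the main obstacle.} If $\pi(\thetaM, \hatetaM) \cong \pi(\thetaM', \hatetaM')$, the Clifford bijection forces $\hatetaM \cong \hatetaM'$ on $CK^{l'}$. Restriction to $\Jmax^1$ yields $\etaM \cong \etaM'$, and the remark following Lemma~\ref{lem:Bushnell-Frohlich}---that the assignment $\thetaM \mapsto \etaM$ is injective---then forces $\thetaM = \thetaM'$. The principal technical obstacle is the verification in Step~3 that $\sigma|_{\Jmax^1}$ is genuinely equal to $\etaM$ rather than merely contains it as a constituent; this is where regularity of $\beta$ enters decisively, via the abelianness of $C$ and hence of $CK^{l'}/\Jmax^1$.
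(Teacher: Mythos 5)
Your proposal is correct and follows essentially the same route as the paper: existence of $\hatetaM$ via Lemma~\ref{lem:p-Sylow-extension} applied to $G=CK^{l'}$, $N=\Jmax^{1}$, $P=\JmM$, $\chi=\etaM$ (with Lemmas~\ref{lem:p-Sylow}, \ref{lem:Stab-etaM} and \ref{lem:eta-extn-etaM} supplying the hypotheses), followed by the Clifford bijection for exhaustion and for uniqueness of the parameters. The only difference is that you make explicit the Gallagher-theorem step, using that $CK^{l'}/\Jmax^{1}\cong C/(C\cap K^{1})$ is abelian, which the paper leaves implicit in its assertion that every element of $\Irr(CK^{l'}\mid\etaM)$ is an extension of $\etaM$.
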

\begin{proof}
The first assertion follows from Lemma~\ref{lem:p-Sylow-extension},
using Lemma~\ref{lem:p-Sylow}, Lemma~\ref{lem:Stab-etaM} and Lemma~\ref{lem:eta-extn-etaM}.

Choose $\beta\in\Omega_{r}$. Any representation in $\Irr(\Hmax^{1}\mid\psi_{\beta})$
is of the form $\thetaM$, any representation in $\Irr(\Jmax^{1}\mid\thetaM)$
is isomorphic to $\etaM$ and any representation in $\Irr(CK^{l'}\mid\etaM)$
is of the form $\hatetaM$. Thus any representation in $\Irr(CK^{l'}\mid\psi_{\beta})$
is of the form $\hatetaM$. By a standard result from Clifford theory
of finite groups \cite[(6.11)]{Isaacs}, this means that any representation
in $\Irr(G_{r}\mid\psi_{\beta})$ is of the form $\pi(\thetaM,\hatetaM)$. 

Suppose that $\pi(\thetaM,\hatetaM)$ and $\pi(\thetaM',\hatetaM')$
are isomorphic. By \cite[(6.11)]{Isaacs} we must have $\hatetaM\cong\hatetaM'$.
Thus the corresponding representations $\etaM$ and $\etaM'$ are
isomorphic, and since these uniquely determine $\thetaM$ and $\thetaM'$,
respectively, we must have $\thetaM\cong\thetaM'$.
\end{proof}
Note that even though $\eta$ is an extension of $\etaM$ to $\JmM$
and $\hatetaM$ is an extension of $\etaM$ to $CK^{l'}$, we do not
know, and do not need to know, whether $\hatetaM$ is an extension
of $\eta$. 

\section{\label{sec:Concluding-remarks}Concluding remarks}

The obstruction to extending a representation in $\Irr(K^{l'}\mid\psi_{\beta})$
to $CK^{l'}$ is given by an element in the Schur multiplier $H^{2}(\F_{q}[\bar{\beta}]^{\times},\mathbb{C}^{\times})$.
Takase conjectured that for $p=\chara\F_{q}$ large enough, this element
is trivial; see \cite[Conjecture~4.6.5]{Takase-16}. Using our main
result, we deduce a strong form of Takase's conjecture, valid for
any prime $p$ (this is also proved, for $p\neq2$, in \cite{KOS}). 
\begin{cor}
Suppose $\beta\in\mfg_{r}$ is regular. Every representation in $\Irr(K^{l'}\mid\psi_{\beta})$
extends to $CK^{l'}$, and hence Takase's conjecture \cite[Conjecture~4.6.5]{Takase-16}
holds for $\F_{q}$ of arbitrary characteristic.
\end{cor}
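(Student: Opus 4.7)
The plan is to deduce the extension statement directly from Theorem~\ref{thm:Main-regular} by a Clifford-theoretic dimension matching; Takase's conjecture then follows at once, since the corollary is precisely the vanishing of the Schur-multiplier obstruction in $H^{2}(\F_{q}[\bar{\beta}]^{\times},\C^{\times})$. Concretely, given $\sigma\in\Irr(K^{l'}\mid\psi_{\beta})$, I will show that $\sigma\cong\hatetaM|_{K^{l'}}$ for some $\hatetaM$ produced by the main theorem, so that $\hatetaM$ itself restricts to an extension of $\sigma$ to $CK^{l'}$.

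First, Frobenius reciprocity produces some $\pi\in\Irr(G_{r}\mid\psi_{\beta})$ whose restriction to $K^{l'}$ contains $\sigma$. By Theorem~\ref{thm:Main-regular}, $\pi\cong\Ind_{CK^{l'}}^{G_{r}}\hatetaM$ for some pair $(\thetaM,\hatetaM)$. Since $K^{l'}$ is normal in $G_{r}$, Mackey's formula simplifies to
\[
\pi|_{K^{l'}}\;\cong\;\bigoplus_{g\in G_{r}/CK^{l'}}{}^{g}\bigl(\hatetaM|_{K^{l'}}\bigr),
\]
and so $\sigma\cong{}^{g_{0}}\bigl(\hatetaM|_{K^{l'}}\bigr)$ for some $g_{0}\in G_{r}$.

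The heart of the argument is a dimension count. Lemma~\ref{lem: dim-etam-etaM} gives $\dim\hatetaM=\dim\etaM=q^{N(N-1)/2}$. In the other direction, $K^{l'}$ stabilises $\psi_{\beta}$ (any $1+\varpi^{l'}x\in K^{l'}$ shifts $\beta$ only within $\mfp^{l'}\mfg_{r}$), so Corollary~\ref{cor:Bushnell-Frohlich} applies to the pair $K^{l}\vartriangleleft K^{l'}$ and expresses $\dim\sigma$ in terms of the radical of the alternating form $\psi_{\beta}([\,\cdot\,,\,\cdot\,])$ on $K^{l'}/K^{l}\cong\mfg_{1}$. A short calculation, essentially the one already in the proof of Lemma~\ref{lem:Radical}, identifies this radical with $C_{\mfg_{1}}(\bar{\beta})$, which has order $q^{N}$ by regularity, so $\dim\sigma=q^{N(N-1)/2}$ for every $\sigma$. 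The dimensions agree, hence $\tau:=\hatetaM|_{K^{l'}}$ is irreducible and $\sigma\cong{}^{g_{0}}\tau$.

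It remains to upgrade $\sigma\cong{}^{g_{0}}\tau$ to $\sigma\cong\tau$. I would compare restrictions to $K^{l}$: Clifford theory and $K^{l'}$-stability of $\psi_{\beta}$ force both $\tau|_{K^{l}}$ and $\sigma|_{K^{l}}$ to be multiples of $\psi_{\beta}$, whereas $({}^{g_{0}}\tau)|_{K^{l}}$ is a multiple of $\psi_{g_{0}\beta g_{0}^{-1}}$, so $g_{0}\in\Stab_{G_{r}}(\psi_{\beta})=CK^{l'}$. Since $\hatetaM$ extends $\tau$ to $CK^{l'}$, conjugation by $g_{0}\in CK^{l'}$ yields ${}^{g_{0}}\tau\cong\tau$, whence $\sigma\cong\tau$ and $\hatetaM$ restricts to the desired extension. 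The only real obstacle I foresee is pinning down the dimension identity $\dim\sigma=q^{N(N-1)/2}$ cleanly; everything else is routine Clifford-theoretic bookkeeping.
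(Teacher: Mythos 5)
Your argument is correct, and its engine is the same as the paper's: the two dimension computations, $\dim\sigma=q^{N(N-1)/2}$ via the radical of the commutator form on $K^{l'}/K^{l}$ (which by regularity is $(C\cap K^{l'})K^{l}/K^{l}$, of order $|C_{\mfg_{1}}(\bar{\beta})|=q^{N}$), and $\dim\hatetaM=\dim\etaM=q^{N(N-1)/2}$ from Theorem~\ref{thm:Main-regular} and Lemma~\ref{lem: dim-etam-etaM}. Where you differ is the Clifford-theoretic scaffolding. The paper stays inside $CK^{l'}$: it takes $\tilde{\sigma}$ to be any irreducible constituent of $\Ind_{K^{l'}}^{CK^{l'}}\sigma$, notes that Theorem~\ref{thm:Main-regular} forces $\tilde{\sigma}$ to be an extension of some $\etaM$, and concludes from the equality of dimensions that $\tilde{\sigma}$ extends $\sigma$. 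You instead go all the way up to $G_{r}$ and come back down via Mackey, which obliges you to handle the conjugating element $g_{0}$ and to invoke $\Stab_{G_{r}}(\psi_{\beta})=CK^{l'}$ (true for regular $\beta$ by surjectivity of $C\to C_{G_{l'}}(\beta_{l'})$, and stated in the paper). That detour is sound but avoidable; note also a small ordering slip: right after the Mackey decomposition you assert $\sigma\cong{}^{g_{0}}\bigl(\hatetaM|_{K^{l'}}\bigr)$, whereas at that stage you only know $\sigma$ is a \emph{constituent} of some ${}^{g_{0}}\bigl(\hatetaM|_{K^{l'}}\bigr)$; the isomorphism follows only once the dimension count is in place. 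If you replace the induction to $G_{r}$ by the paper's choice of $\tilde{\sigma}\in\Irr(CK^{l'}\mid\sigma)$, both the Mackey step and the $g_{0}$ cleanup disappear.
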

\begin{proof}
Let $\sigma\in\Irr(K^{l'}\mid\psi_{\beta})$. It is straightforward
(cf.~\cite[Proposition~4.2]{Hill_regular}) that the bilinear form
$h_{\beta}$ on $K^{l'}/K^{l}$ defined by $\psi_{\beta}$ has radical
$(C_{G_{r}}(\beta)\cap K^{l'})K^{l}/K^{l}$. We have 
\[
|(C_{G_{r}}(\beta)\cap K^{l'})K^{l}/K^{l}|=|C_{\mfg_{1}}(\bar{\beta})|=q^{N},
\]
so that by Corollary~\ref{cor:Bushnell-Frohlich}, $\sigma$ has
dimension $q^{N(N-1)/2}$. Let $\tilde{\sigma}\in\Irr(CK^{l'}\mid\sigma)$
be a constituent of $\Ind_{K^{l'}}^{CK^{l'}}\sigma$. By Theorem~\ref{thm:Main-regular},
any representation in $\Irr(CK^{l'}\mid\psi_{\beta})$, so in particular
$\tilde{\sigma}$, is an extension of some representation in $\Irr(\Jmax^{1}\mid\psi_{\beta})$.
Let $\etaM\in\Irr(\Jmax^{1}\mid\psi_{\beta})$ be such that $\tilde{\sigma}$
is an extension of $\etaM$. By Lemma~\ref{lem: dim-etam-etaM} $\dim\etaM=q^{N(N-1)/2}$.
Thus, $\tilde{\sigma}$ is an irreducible representation of dimension
$q^{N(N-1)/2}$ whose restriction to $K^{l'}$ has a constituent $\sigma$
of the same dimension. Thus $\tilde{\sigma}$ is an extension of $\sigma$.
\end{proof}
In \cite{Alex_smooth_reps_GL2} the construction of split regular
representations of $\GL_{2}(\mfo_{r})$ appealed to Hill's construction
\cite[Theorem~4.6]{Hill_regular}. Since Takase \cite{Takase-16}
has realised that this construction does not produce all the split
regular representations, one should view the construction of the current
paper as superseding that of \cite{Alex_smooth_reps_GL2}, while at
the same time unifying the split regular case with the cuspidal.

In \cite{KOS} the dimensions and multiplicities of the regular representations
of $\GL_{N}(\mfo_{r})$ were determined for $p\neq2$. This can also
be done using Theorem~\ref{thm:Main-regular}, and our construction
implies that the dimension and multiplicity formulas there remain
valid for $p=2$.

\bibliographystyle{alex}
\bibliography{alex}

\end{document}